\documentclass{article}

\usepackage[numbers]{natbib}
\usepackage[final]{neurips_2023}


\usepackage[utf8]{inputenc} 
\usepackage[T1]{fontenc}    
\usepackage{hyperref}   
\hypersetup{colorlinks,allcolors= blue}

\usepackage{url}            
\usepackage{booktabs}       
\usepackage{amsfonts}       
\usepackage{nicefrac}       
\usepackage{microtype}      
\usepackage{booktabs}       
\usepackage{amsfonts}       
\usepackage{nicefrac}       
\usepackage{microtype}      

\usepackage{amsmath,amssymb,amsthm}
\usepackage[ruled,vlined,linesnumbered]{algorithm2e}
\usepackage{verbatim}

\usepackage{paralist}
\usepackage{caption}
\usepackage{algorithmic}
\usepackage{color}
\usepackage{appendix}
\usepackage{titletoc}
\usepackage{graphicx}
\usepackage{minitoc}
\usepackage{adjustbox}

\newtheorem{theorem}{Theorem}[section]
\newtheorem{lemma}[theorem]{Lemma}
\newtheorem{proposition}[theorem]{Proposition}

\newtheorem{corollary}[theorem]{Corollary}

\newtheorem{remark}[theorem]{Remark}






\usepackage{enumitem}




\def\tOLEE{\hbox{\rm\tiny ALEE}}

\def\op{\hbox{\rm\footnotesize op}}

\newcommand{\Bmatn}{\mathbf{B}_n}
\newcommand{\Id}{\mathbf{I}}


\newcommand{\matsnorm}[2]{|\!|\!| #1 | \! | \!|_{{#2}}}
\newcommand{\vecnorm}[2]{\| #1\|_{#2}}

\newcommand{\Fnorm}[1]{\ensuremath{\matsnorm{#1}{\tiny{\mbox{F}}}}}



\newcommand{\Exs}{\ensuremath{{\mathbb{E}}}}
\newcommand{\Prob}{\ensuremath{{\mathbb{P}}}}
\newcommand{\R}{\ensuremath{{\mathbb{R}}}}
\newcommand{\cF}{\ensuremath{{\mathcal{F}}}}

\newcommand{\numobs}{\ensuremath{n}}

\newcommand{\bepsn}{\boldsymbol{\varepsilon}_n}

\newcommand{\conp}{\overset{p}{\longrightarrow}}

\newcommand{\condi}{\overset{d}{\longrightarrow}}

\newcommand{\mzero}{\mathbf{0}}
\newcommand{\mI}{\mathbf{I}}

\newcommand{\eig}[1]{\ensuremath{\lambda_{#1}}}

\newcommand{\eigmin}{\ensuremath{\eig{\min}}}

\DeclareMathOperator{\var}{Var}

\DeclareMathOperator{\trace}{trace}

\newcommand{\NORMAL}{\ensuremath{\mathcal{N}}}

\DeclareMathOperator{\Var}{Var}

\newcommand{\btheta}{\ensuremath{\boldsymbol{\theta}}}
\newcommand{\OLEE}{{\scriptscriptstyle \mathrm{ALEE}}}
\newcommand{\OLEEK}{{\scriptscriptstyle \mathrm{ALEE},} k}

\newcommand{\hbtheta}{\ensuremath{\widehat{\boldsymbol{\theta}}}}
\newcommand{\hbolee}{\ensuremath{\widehat{\boldsymbol{\theta}}_{\OLEE}}}
\newcommand{\holee}{\ensuremath{\widehat{\theta}_{\OLEE}}}
\newcommand{\holeek}{\ensuremath{\widehat{\theta}_{\OLEEK}}}
\newcommand{\thetak}{\ensuremath{\theta_{k}^*}}

\newcommand{\bSigma}{\mathbf{\Sigma}}

\newcommand{\by}{\ensuremath{y}}
\newcommand{\bx}{\ensuremath{\boldsymbol{x}}}
\newcommand{\ba}{\ensuremath{\boldsymbol{a}}}
\newcommand{\be}{\ensuremath{\boldsymbol{e}}}
\newcommand{\bv}{\ensuremath{\boldsymbol{v}}}
\newcommand{\bu}{\ensuremath{\boldsymbol{u}}}

\newcommand{\bz}{\ensuremath{\boldsymbol{z}}}
\newcommand{\bw}{\ensuremath{\boldsymbol{w}}}

\newcommand{\bA}{\ensuremath{\mathbf{A}}}

\newcommand{\bC}{\ensuremath{\mathbf{C}}}

\newcommand{\bI}{\ensuremath{\mathbf{I}}}

\newcommand{\bW}{\ensuremath{\mathbf{W}}}

\newcommand{\bU}{\ensuremath{\mathbf{U}}}
\newcommand{\bV}{\ensuremath{\mathbf{V}}}
\newcommand{\bP}{\ensuremath{\mathbf{P}}}
\newcommand{\bX}{\ensuremath{\mathbf{X}}}
\newcommand{\bY}{\ensuremath{\mathbf{Y}}}

\newcommand{\bLambda}{\ensuremath{\mathbf{\Lambda}}}

\newcommand{\bS}{\ensuremath{\mathbf{S}}}
\newcommand{\Sn}{\ensuremath{\mathbf{S}}_{\numobs}}

\newcommand{\V}{\ensuremath{\mathbf{V}}}

\newcommand{\eps}{\epsilon}


\title{Adaptive Linear Estimating Equations}

%

\author{%
  Mufang Ying \\
  Department of Statistics\\
  Rutgers University - New Brunswick\\
  \texttt{my426@scarletmail.rutgers.edu} \\
  \And
  Koulik Khamaru \\
  Department of Statistics\\
  Rutgers University - New Brunswick\\
  \texttt{k1241@stat.rutgers.edu} \\
  \And
  Cun-Hui Zhang \\
  Department of Statistics\\
  Rutgers University - New Brunswick\\
  \texttt{czhang@stat.rutgers.edu} \\
}

\def\tOLEE{\hbox{\rm\tiny ALEE}}
\def\hbolee{\hbtheta_{\tOLEE}}

\begin{document}

\doparttoc 
\faketableofcontents 

\part{} 

\maketitle

\begin{abstract}
 Sequential data collection has emerged as a widely adopted technique for enhancing the efficiency of data gathering processes. Despite its advantages, such data collection mechanism often introduces complexities to the statistical inference procedure. For instance, the ordinary least squares (OLS) estimator in an adaptive linear regression model can exhibit non-normal asymptotic behavior, posing challenges for accurate inference and interpretation. In this paper, we propose a general method for constructing debiased estimator which remedies this issue. It makes use of the idea of adaptive linear estimating equations, and we establish theoretical guarantees of asymptotic normality, supplemented by discussions on achieving near-optimal asymptotic variance. A salient feature of our estimator 
 is that in the context of multi-armed bandits,  our estimator retains the non-asymptotic performance of the least squares estimator while obtaining asymptotic normality property. Consequently, this work helps connect two fruitful paradigms of adaptive inference: a) non-asymptotic inference using concentration inequalities  and b) asymptotic inference via asymptotic normality.   
\end{abstract}

\section{Introduction}
Adaptive data collection arises as a common practice in various scenarios, with a notable example being the use of (contextual) bandit algorithms. Algorithms like these aid in striking a balance between exploration and exploitation trade-offs within decision-making processes, encompassing domains such as personalized healthcare and web-based services~\cite{yom2017encouraging,liao2020personalized,agarwal2008online,li2010contextual}. For instance, in personalized healthcare, the primary objective is to choose the most effective treatment for each patient based on their individual characteristics, such as medical history, genetic profile, and living environment. Bandit algorithms can be used to allocate treatments based on observed response, and the algorithm updates its probability distribution to incorporate new information as patients receive treatment and their response is observed. Over time, the algorithm can learn which treatments are the most effective for different types of patients.

Although the adaptivity in data collection improves the quality of data, the sequential nature (non-iid) of the data makes the inference procedure quite challenging~\cite{xu2013,luedtke2016statistical, bowden2017unbiased,nie2018adaptively,neel2018mitigating,hadad2021,shin2019bias,shin2019sample}. There is a lengthy literature on the problem of parameter estimation  in the adaptive design setting. In a series of work~\cite{Lai1979,Lai1982,lai1981consistency}, the authors studied the consistency of the least squares estimator for an adaptive linear model. In a later work, Lai~\cite{Lai1994} studied the consistency of the least squares estimator in a nonlinear regression model. The collective wisdom of these papers is that, for adaptive data collection methods, standard estimators are consistent under a mild condition on the maximum and minimum eigenvalues of the covariance matrix~\cite{Lai1982,Lai1994}. In a more recent line of work~\cite{abbasi2011improved,Abbasi2011}, the authors provide a high probability upper bound on the $\ell_2$-error of the least squares estimator for a linear model. We point out that, while the high probability bounds provide a quantitative understanding of OLS, these results assume a stronger sub-Gaussian assumption on the noise variables.     

The problem of inference, i.e. constructing valid confidence intervals, with adaptively collected data is much more delicate. Lai and Wei~\cite{Lai1982} demonstrated that for a unit root autoregressive model, which is an example of adaptive linear regression models, the least squares estimator doesn't achieve asymptotic normality. Furthermore, the authors showed that for a linear regression model, the least squares estimator is asymptotically normal when the data collection procedure satisfies a stability condition. Concretely, letting $\bx_i$ denote the covariate associated with $i$-th sample, the authors require 
\begin{equation}
\label{eqn:Lai-stability}
    \Bmatn^{-1} \Sn \conp \Id 
\end{equation}
where $\Sn = \sum_{i = 1}^n \bx_i \bx_i^\top$ and  $\{\Bmatn\}_{n \geq 1}$ is a
sequence of \emph{non-random} positive definite matrices.   
Unfortunately, in many scenarios, the stability condition~\eqref{eqn:Lai-stability} 
is violated~\cite{zhang2020inference,Lai1982}. Moreover, in practice, it might be difficult to verify whether the stability condition~\eqref{eqn:Lai-stability} holds or not.  In another line of research~\cite{hadad2021,zhan2021off,zhan2023policy,bibaut2021post,dimakopoulou2021online,nie2018adaptively,singh2020don,lin2023semi,zhang2020inference}, the authors assume knowledge of the underlying data collection algorithm and provide asymptotically valid confidence intervals. While this approach offers intervals under a much weaker assumption on the underlying model, full knowledge of the data collection algorithm is often unavailable in practice.

\paragraph{Online debiasing based methods:}
In order to produce valid statistical inference when the stability condition~\eqref{eqn:Lai-stability} does not hold, some authors ~\cite{deshpande2018accurate,deshpande2021online,Khamaru2021} utilize the idea of online debiasing. At a high level, the online debiased estimator
reduces bias from an initial estimate (usually the least squares estimate) by adding some correction terms, and the online debiasing procedure does not require the knowledge of the data generating process. Although this procedure guarantees asymptotic reduction of bias to zero, the bias term's convergence rate can be quite slow.

In this work, we consider estimating the unknown parameter in an adaptive linear model by using a set of adaptive linear estimating equations (ALEE). We show that our proposed ALEE estimator achieves asymptotic normality  without knowing the exact data collection algorithm while addressing the slowly decaying bias problem in online debiasing procedure.




\section{Background and problem set-up}

In this section, we provide the background for our problem and set up a few notations. We begin by defining the adaptive data collection mechanism for linear models. 

\subsection{Adaptive linear model}
Suppose a scalar response variable $y_t$ is linked to a covariate vector $\bx_t \in \R^d$ at time $t$ via the linear model:
\begin{equation}
\label{eqn:LM}
y_t = \bx_t^\top \btheta^*  + \epsilon_t \qquad \text{for } t \in [n],
\end{equation}
where $\btheta^* \in \R^d$ is the unknown parameter of interest. 

In an adaptive linear model, the regressor $\bx_t$ at time $t$ is assumed to be a (unknown) function of the prior data point $\{\bx_1, y_1, \ldots, \bx_{t - 1}, y_{t - 1} \}$ as well as additional source of randomness that may be present in the data collection process. Formally,  we assume there is an increasing sequence of $\sigma$-fields  $\{ \cF_t \}_{t \geq 0}$ such that 
\begin{equation*}
    \sigma(\bx_1, y_1, \ldots ,\bx_{t - 1}, y_{t - 1}, \bx_{t}) \in \cF_{t - 1} \qquad \text{for } t \in [n].
\end{equation*}
For the noise variables $\{\eps_t \}_{t \geq 1}$ appearing in equation~\eqref{eqn:LM}, we impose the following conditions 
\begin{equation}
    \label{eq:noise-cond}
    \Exs [\eps_t | \cF_{t-1}]=0, \quad  \Exs [\eps_t^2|\cF_{t-1}]=\sigma^2, \quad \text{and} \quad  \sup_{t\geq 1}\Exs[|\eps_t/\sigma|^{2+\delta}|\cF_{t-1}] < \infty,
\end{equation}  
for some $\delta>0$. The above condition is relatively mild compared to a sub-Gaussian condition. 

Examples of adaptive linear model arise in various problems, including multi-armed and contextual bandit problems, dynamical input-output systems, adaptive approximation schemes and time series models. For instance, in the context of the multi-armed bandit problem, the design vector $\bx_t$ is one of the basis vectors $\{ \be_k\}_{k \in [d]}$, representing an arm being pulled, while  $\btheta^*, y_t$ represent the true mean reward vector and reward at time $t$, respectively.

\newcommand{\bthetaHat}{\ensuremath{\boldsymbol{\widehat{\theta}}}}

\subsection{Adaptive linear estimating equations}
As we mentioned earlier, the OLS estimator can fail to achieve asymptotic normality due to the instability of the covariance matrix with adaptively collected data. To get around this issue, we consider a different approach ALEE (adaptive linear estimating equations). Namely, we obtain an estimate by solving a system of linear estimating equations with adaptive weights, 
\begin{equation}
\label{eq:olee}
\mathrm{\texttt{ALEE:}} \qquad 
\hbox{$\sum_{t=1}^n \bw_t(y_t - \bx_t^\top\hbolee) = \mzero.$}
\end{equation}
Here the weight $\bw_t \in \R^d$ is chosen in a way that $\bw_t \in \cF_{t -1}$ for $t \in [n]$. Let us now try to gain some intuition behind the construction of ALEE. Rewriting equation~\eqref{eq:olee}, we have 
\begin{align}
    \label{eqn:olle-rewrite}
    \hbox{$\left\{\sum_{t=1}^n \bw_t \bx_t \right\}  \cdot (\hbolee - \btheta^*) = 
     \sum_{t = 1}^n \bw_t \eps_t.$}
\end{align}
Notably, the choice of $\bw_t \in \cF_{t - 1}$ makes $\sum_{t = 1}^n \bw_t \eps_t$  the sum of a martingale difference sequence. Our first theorem postulates conditions on the weight vectors $\{ \bw_t \}_{t \geq 1}$ such that the right-hand side of~\eqref{eqn:olle-rewrite} converges to  normal distribution asymptotically. Throughout the paper, we use the shorthand $\bW_t = (\bw_1,\ldots,\bw_t)^\top\in \R^{t\times d}$, 
$\bX_t =(\bx_1,\ldots,\bx_t)^\top \in \R^{t\times d}$.  
\begin{proposition}
\label{prop:affinity}
Suppose condition~\eqref{eq:noise-cond} holds and the predictable sequence $\{\bw_t\}_{1 \leq t \leq n}$ satisfies
\begin{equation}
\label{eq:w_stability}
 \max_{1 \leq t\leq n}\|\bw_t\|_2 = o_p (1) \qquad \text{and} \qquad 
 \big\|\mI_d - \bW_n^\top\bW_n \big\|_{\op} = o_p (1).
\end{equation}
Let $\bA_w = \bV_w\bU_w^\top \bX_n$ with $\bW_n = \bU_w \bLambda_w \bV_w^\top$ being the SVD of $\bW_n$. Then, 
\begin{equation}
\label{eqn:affinity-normality}
    \bA_w(\hbolee - \btheta^*) / \widehat{\sigma}  \overset{d}{\longrightarrow}  \NORMAL\big({\bf 0},\bI_d\big),
\end{equation}
where $\widehat{\sigma}$ is any consistent estimator for $\sigma$. 
\end{proposition}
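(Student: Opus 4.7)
The plan is to rearrange the ALEE into an identity equating $\bA_w(\hbolee - \btheta^*)$ with a martingale sum, up to a matrix prefactor that converges to the identity, and then invoke a multivariate martingale CLT.

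Substituting $y_t = \bx_t^\top \btheta^* + \eps_t$ into the estimating equation~\eqref{eq:olee} gives the matrix identity $\bW_n^\top \bX_n\,(\hbolee - \btheta^*) = \sum_{t=1}^n \bw_t \eps_t$. Using the SVD $\bW_n = \bU_w \bLambda_w \bV_w^\top$ together with $\bV_w^\top \bV_w = \mI_d$, one factors
\begin{equation*}
\bW_n^\top \bX_n \;=\; \bV_w \bLambda_w \bU_w^\top \bX_n \;=\; (\bV_w \bLambda_w \bV_w^\top)\,\bA_w,
\end{equation*}
which converts the identity into $(\bV_w \bLambda_w \bV_w^\top)\, \bA_w (\hbolee - \btheta^*) = \sum_{t=1}^n \bw_t \eps_t$. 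The second part of~\eqref{eq:w_stability} forces $\|\bLambda_w^2 - \mI_d\|_\op = o_p(1)$, so by continuity both $\bV_w \bLambda_w \bV_w^\top$ and its inverse converge to $\mI_d$ in operator norm in probability. By Slutsky it therefore suffices to establish
\begin{equation*}
\frac{1}{\sigma}\sum_{t=1}^n \bw_t \eps_t \;\condi\; \NORMAL(\mathbf{0},\mI_d),
\end{equation*}
and then combine with $\hat{\sigma}/\sigma \inprob 1$.

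The core step is a multivariate martingale CLT for $\{\bw_t \eps_t\}$, which is a martingale difference sequence since $\bw_t$ is $\cF_{t-1}$-measurable and $\Exs[\eps_t \mid \cF_{t-1}] = 0$. The conditional-covariance hypothesis holds because $\sum_t \Exs[\eps_t^2 \mid \cF_{t-1}]\,\bw_t \bw_t^\top = \sigma^2 \bW_n^\top \bW_n \inprob \sigma^2 \mI_d$ by~\eqref{eq:w_stability}. For the Lindeberg condition I would verify the stronger Lyapunov bound via the $(2{+}\delta)$-moment in~\eqref{eq:noise-cond}: one has
\begin{equation*}
\sum_{t=1}^n \Exs\big[\|\bw_t \eps_t\|^{2+\delta}\,\big|\,\cF_{t-1}\big] \;\lesssim\; \sigma^{2+\delta}\,\max_{t\leq n}\|\bw_t\|^{\delta}\,\sum_{t=1}^n \|\bw_t\|^2,
\end{equation*}
where $\sum_t \|\bw_t\|^2 = \trace(\bW_n^\top \bW_n) = d + o_p(1)$ and $\max_t \|\bw_t\|^\delta = o_p(1)$ by the first part of~\eqref{eq:w_stability}.

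I expect the martingale CLT --- specifically the Lyapunov verification that couples $\max_t \|\bw_t\| = o_p(1)$ with the uniform $(2{+}\delta)$-moment of $\eps_t$ --- to be the main technical content of the proof, though it is largely routine given a standard reference such as Hall--Heyde. A minor care-point is the invertibility of $\bLambda_w$ used in the SVD manipulation, which holds on the event $\{\|\bW_n^\top \bW_n - \mI_d\|_\op \leq 1/2\}$, whose probability tends to one.
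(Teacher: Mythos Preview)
Your proposal is correct and follows essentially the same route as the paper: factor the ALEE identity through the SVD so that $\bA_w(\hbolee-\btheta^*)$ equals $\bV_w\bLambda_w^{-1}\bV_w^\top\sum_t\bw_t\eps_t$, observe that the prefactor is $\bI_d+o_p(1)$ by~\eqref{eq:w_stability}, and apply a martingale CLT to $\sum_t\bw_t\eps_t/\sigma$ followed by Slutsky. If anything, you supply more detail than the paper does on the Lyapunov verification and on the invertibility of $\bLambda_w$.
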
 
\paragraph{Proof.} 
Invoking the second part of the condition~\eqref{eq:w_stability}, we have that $\bLambda_w$ is invertible for large $n$, and $\|\bV_w\bLambda_w^{-1}\bV_w^\top -\bI_d\|_{\op}=o_p(1)$. Utilizing the expression~\eqref{eqn:olle-rewrite}, we have 
\begin{equation*}
\bA_w(\hbolee - \btheta^*)/\sigma = \bV_w\bLambda_w^{-1}\bV_w^\top\bW_n^\top\bX_n(\hbolee - \btheta^*)/ \sigma =  \bV_w\bLambda_w^{-1}\bV_w^\top\sum_{t=1}^n \bw_t \eps_t/\sigma. 
\end{equation*}
Invoking the stability condition on the weights $\{\bw_t\}$ and using the fact that $\sum_{t=1}^n \bw_t \eps_t$ is a martingale difference sequence, we conclude from martingale central limit theorem~\cite[Theorem 2.1]{helland1982central} that
\begin{align*}
\hbox{$\sum_{t=1}^n \bw_t \eps_t /\sigma \condi \NORMAL\big({\bf 0},\bI_d\big).$}
\end{align*}
Combining the last equation with
$\|\bV_w\bLambda_w^{-1}\bV_w^\top -\bI_d\|_{\op}=o_p(1)$ and using Slutsky's theorem yield
\begin{align*}
\bA_w(\hbolee - \btheta^*)/\sigma
\condi  \NORMAL\big({\bf 0},\bI_d\big).
\end{align*}
The claim of Proposition~\ref{prop:affinity} now follows from Slutsky's theorem. 

A few comments regarding the Proposition~\ref{prop:affinity} are in order.  Straightforward calculation shows
\begin{align}
    \label{eqn:affinity-chisq}
     \bA_w^\top \bA_w   =  \bX_n^\top \bP_w \bX_n \preceq \bS_n, \qquad  \text{where} \qquad 
     \bP_w = \bW_n (\bW_n^\top \bW_n)^{-1} \bW_n^\top. 
\end{align}
In words, the volume of the confidence region based on \eqref{eqn:affinity-normality} is always larger than the confidence region generated by the least squares estimate. Therefore, the ALEE-based inference, which is consistently valid, exhibits a reduced efficiency in cases where both types of confidence regions are valid. Compared with the confidence regions based on OLS, the advantage of the ALEE approach is to provide flexibility in the choice of weights to guarantee the validity of the CLT conditions~\eqref{eq:w_stability}.

Next, note that the matrix $\bA_w$ is asymptotically equivalent to the matrix $\bW_n^\top \bX_n$ (see equation~\eqref{eqn:olle-rewrite}) under the stability condition~\eqref{eq:w_stability}. The benefit of this reformulation is  that it helps us better understand efficiency of ALEE compared with the OLS. This has led us to define a notion of {\it affinity} between the weights  $\{\bw_t \}_{t \geq 1}$ and covariates $\{\bx_t\}_{ t \geq 1}$ for better understanding of the efficiency of ALEE and ways to design nearly optimal weights, as it will be clear in the next section. 

Finally, it is straightforward to obtain a consistent estimate for $\sigma$. For instance, assuming $\log(\lambda_{\max}(\bX_n^\top \bX_n)) / n \overset{a.s.}{\longrightarrow} 0$ and the noise condition~\eqref{eq:noise-cond}, we have 
\begin{align}
\label{eqn:noise0var-estimate}
\widehat{\sigma}^2 := \frac{1}{n} \sum_{t = 1}^n( y_t - \bx_t^\top \widehat{\btheta}_{  \textrm{LS}})^2 \overset{a.s.}{\longrightarrow} \sigma^2. 
\end{align}
Here, $\widehat{\btheta}_{  \textrm{LS}}$ refers to the least squares estimate. See~\cite[Lemma 3]{Lai1982} for a detailed proof of equation~\eqref{eqn:noise0var-estimate}.
%


\newcommand{\affin}{\mathcal{A}}
\section{Main results} 
\label{sec:Main}
In this section, we propose methods to construct weights $\{\bw_t\}_{t \geq 1}$ which satisfy the stability property~\eqref{eq:w_stability}, and study the resulting ALEE. Section~\ref{sec:Bandits} is devoted to the multi-arm bandit case, Section~\ref{sec:AR} 
to an autoregressive model, and Section~\ref{sec:Contextual-bandit}
to the contextual bandit case. Before delving into details, let us try to understand intuitively how to construct weights that have desirable properties.

The expression~\eqref{eqn:affinity-chisq} reveals that the efficiency of ALEE depends on the projection of the data matrix $\bX_n$ on $\bW_n$. Thus, the efficiency of the ALEE approach can be measured by the principal angles between the random projections $\bP_w$ in \eqref{eqn:affinity-chisq} and $\bP_x=\bX_n\bS_n^{-1}\bX_n^\top$. 
Accordingly, we define the \emph{affinity} $\affin(\bW_n, \bX_n)$ of the weights $\{ \bw_t \}_{t \geq 1}$ as the cosine of the largest principle angle, or equivalently
\begin{align}
\label{eqn:affinity-def}
    \affin(\bW_n, \bX_n) 
    = \sigma_d(\bP_x  \bP_w) 
    = \sigma_d\big(\bU_w^\top\bX_n\bS_n^{-1/2} \big)
\end{align}
as the $d$-th largest singular value of $\bP_x\bP_w$.
Formally, the above definition captures the cosine of the angle between the two subspaces 
spanned by the columns of $\bX_n$ and $\bW_n$, respectively~\cite{ipsen1995angle}.
Good weights $\{\bw_t \}_{t \geq 1}$ are those with relatively large affinity or
%
\begin{align}
\label{eqn:cosine-intuition}
    \bU_w \propto \bX_n\bS_n^{-1/2}  \qquad \text{(approximately)}.
\end{align}
\newcommand{\numArms}{K}

\subsection{Multi-arm bandits}
\label{sec:Bandits}
In the context of the $\numArms$-arm bandit problem, the Gram matrix has a diagonal structure, which means that we can focus on constructing weights $\{ \bw_t \}_{t \geq 1}$ for each coordinate independently. For an arm $k \in [K]$ and round $t \geq 1$, define  
\begin{align}
\label{eqn:partial-sum-bandits}
  \hbox{$s_{t,k} = s_0 + \sum_{i = 1}^t x_{i,k}^2 \qquad \text{for some positive } s_0 \in \cF_0.$}
\end{align}
Define the $k$-th coordinate of the weight $\bw_t$ as 
\begin{align}
\label{eqn:weights-bandits-demo}
    w_{t, k} = f\Big(\frac{s_{t,k}}{s_0}\Big) \cdot \frac{x_{t, k}}{\sqrt{s_0}} \quad \text{with} \quad f(x) =  \sqrt{\frac{\log2}{x\cdot \log(e^2 x)\cdot (\log\log(e^2 x))^2}}.
\end{align}
The intuition behind the above construction is as follows. The discussion at near equation~\eqref{eqn:cosine-intuition} indicates that the $k$-th coordinate of $\bw_t$ should be proportional to $x_{t,k}/(\sum_{i \leq n}x^2_{i,k})^{1/2}$. However, the weight $\bw_{t}$ is required to be predictable, which can only depend on the data points~\footnote{Note that $\bx_{t, k} \in \cF_{t - 1}$ can be used to construct $\bw_t$} up to time $t$. Consequently, we approximate the sum $\sum_{i \leq n}x^2_{i,k}$ by the partial sum 
$s_{t, k}$ in \eqref{eqn:partial-sum-bandits}. Finally, note that 
\begin{align}
    w_{t, k} = f\Big(\frac{s_{t, k}}{s_0}\Big) \cdot \frac{x_{t,k}}{\sqrt{s_0}} \approx \frac{x_{t, k}}{\sqrt{s_{t, k}}}. 
\end{align}
The logarithmic factors in~\eqref{eqn:weights-bandits-demo} ensure that the stability conditions~\eqref{eq:w_stability} hold. In the following theorem, we generalize the above method as a general strategy for constructing weights $\{\bw_t\}_{t \geq 1}$ satisfying the stability condition~\eqref{eq:w_stability}.
\subsubsection{Stable weight construction strategy}
\label{sec:construction-of-weights}
Let $f(x)$ be a positive decreasing function with support $[ 1, \infty)$ and increasing derivative $f^\prime(x)$. Additionally, let $f$ satisfy the condition that $f^\prime/f$ is increasing as well as 
\begin{equation}
\label{eq:f-property}
\int_{1}^\infty f^2(x)dx = 1 \quad  \text{and} \quad \int_{1}^\infty f(x)dx = \infty.
\end{equation}
With $s_0 \in \cF_0$, we define weight $w_{t, k}$ as 
\begin{align}
\label{eqn:weights-bandits}
    w_{t, k} = f\Big(\frac{s_{t, k}}{s_0}\Big) \frac{x_{t,k}}{\sqrt{s_0}} \qquad \text{with} \qquad s_{t,k} = s_0 + \sum_{i = 1}^t x_{i,k}^2.
\end{align}
\begin{figure}[!t]
  \centering
  \begin{minipage}[b]{0.49\textwidth}
    \centering
    \includegraphics[scale = 0.33]{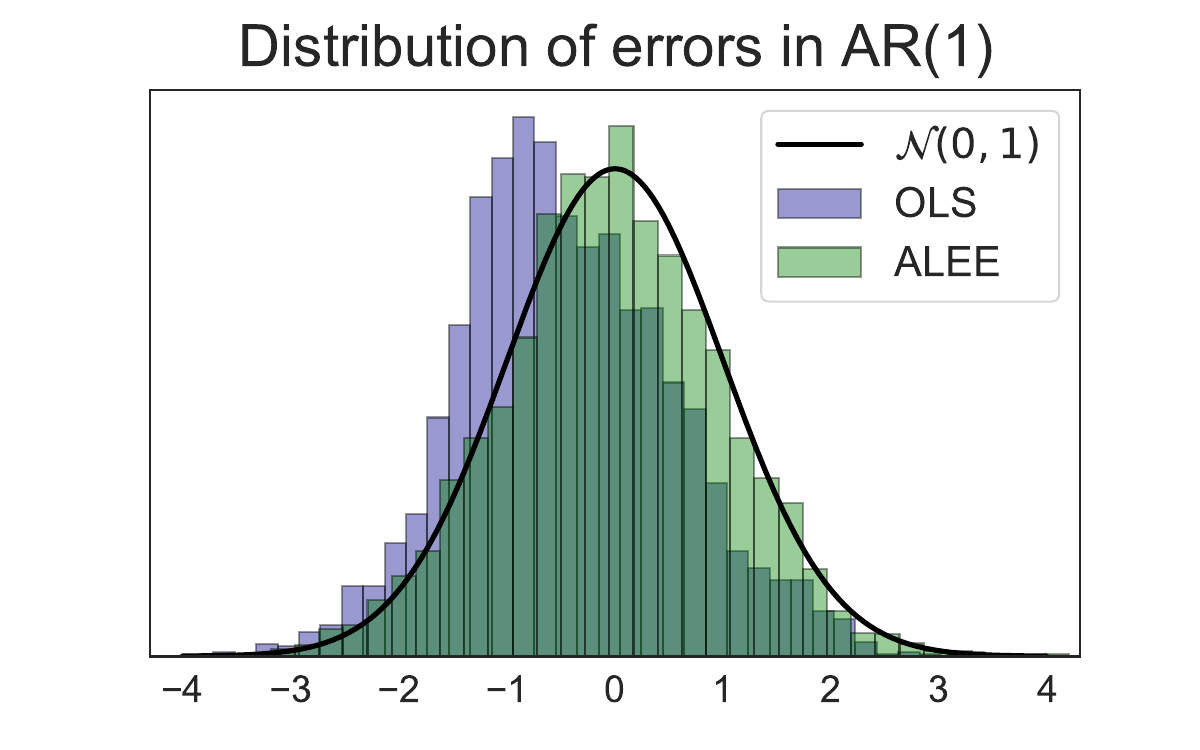}
  \end{minipage}
  \hfill
  \begin{minipage}[b]{0.49\textwidth}
    \centering
    \includegraphics[scale=0.33]{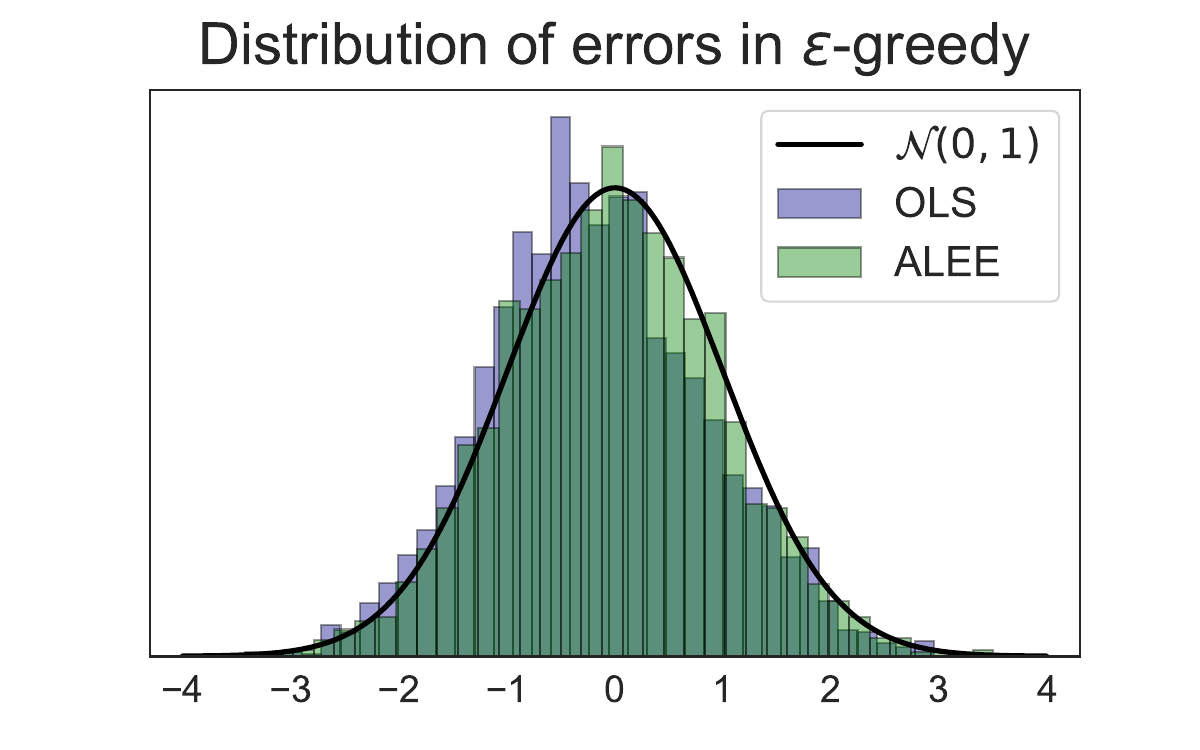}
  \end{minipage}
  \caption{Empirical distribution of the standardized errors from OLS and ALEE approach. Results are obtained with a dataset of size $n = 1000$ and  $3000$ independent replications. Left: AR$(1)$ model $ y_t = y_{t-1} + \eps_t$ with independent errors $\eps_{t} \sim \NORMAL(0,1)$.  Right: Two-armed bandit problem with equal arm mean $\theta^*_1 = \theta^*_2 = 0.3$ and independent noise $\eps_{t} \sim \NORMAL(0,1)$. Figure~\ref{fig:asymp-poisson} in Section~\ref{app:poisson-asymp-figure} considers the same setting with centered Poisson noise, which is not sub-Gaussian.}
  \label{fig:asym-errors}
\end{figure}
A key condition that ensures the weights $\{w_{t,k}\}_{t\geq 1}$ satisfy the desirable stability property~\eqref{eq:w_stability} is 
\begin{equation}
\label{eq:1d-clt-cond} 
 \max_{1\leq t\leq n}f^2\Big(\frac{s_{t,k}}{s_0}\Big)\frac{x_{t,k}^2}{s_0} + \max_{1\leq t\leq n}\bigg(1 - \frac{f(s_{t,k}/s_0)}{f(s_{t-1,k}/s_0)}\bigg) + \int_{s_{n,k}/s_0}^\infty f^2(x) dx  = o_{p}(1). 
\end{equation}
For multi-armed bandits, this condition is automatically satisfied when both quantities  $1/s_0 $ and $ s_0/s_{n,k}$ converge to zero in probability.  Putting together the pieces, we have the following result for multi-armed bandits.
\begin{theorem}
\label{theo:1d-clt} 
Suppose condition~\eqref{eq:noise-cond} holds and  $1/s_0+s_0/s_{n,k} = o_p(1)$ for some $k \in [K]$. Then, the $k$-th coordinate $\holeek$,  
obtained using weights from equation~\eqref{eqn:weights-bandits}, satisfies
\begin{equation}
\label{eq:1d-normality}
(\holeek - \theta^*_k)  \cdot \int_{1}^{s_{n,k}/s_0} \frac{\sqrt{s_0}}{\widehat{\sigma}} f(x)dx \condi \NORMAL(0, 1),
\end{equation}
where $\widehat{\sigma}$ is a consistent estimate of $\sigma$. Equivalently, 
\begin{equation}
\label{eq:1d-normality-simple}
        \frac{(\holeek-\theta_k^*)}{\widehat{\sigma} \sqrt{\sum_{1 \leq t \leq n} w_{t,k}^2} } \cdot \bigg(\sum_{ t =1}^n w_{t,k} x_{t,k} \bigg)  \condi \NORMAL(0, 1).
\end{equation}
\end{theorem}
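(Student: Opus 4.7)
In the $K$-arm bandit model $\bx_t=\be_{a_t}$, so only the $a_t$-th coordinate of $\bw_t$ is nonzero and the ALEE system~\eqref{eq:olee} decouples into $K$ scalar estimating equations. Solving the $k$-th one yields
\begin{equation*}
\holeek-\theta_k^* \;=\; \frac{\sum_{t=1}^n w_{t,k}\,\eps_t}{\sum_{t=1}^n w_{t,k}\,x_{t,k}}.
\end{equation*}
Because $w_{t,k}\in\cF_{t-1}$, the numerator is a martingale-difference sum. The plan is to apply a scalar martingale CLT to the numerator, to approximate the denominator by a Riemann-sum integral, and to combine the two via Slutsky. Equivalently, this is the one-dimensional specialization of Proposition~\ref{prop:affinity}, so it suffices to verify the scalar versions of~\eqref{eq:w_stability}: namely $\max_{t\le n}w_{t,k}^2=o_p(1)$ and $\sum_{t=1}^n w_{t,k}^2\to 1$ in probability.

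\paragraph{Verifying scalar stability via Riemann sums.} For the uniform smallness, since $f$ is decreasing and $s_{t,k}/s_0\ge 1$, one has $\max_{t\le n}w_{t,k}^2 \le f^2(1)/s_0$, which is $o_p(1)$ by $1/s_0=o_p(1)$. For the sum-to-one condition, the bandit identity $x_{t,k}^2=s_{t,k}-s_{t-1,k}\in\{0,1\}$ gives
\begin{equation*}
\sum_{t=1}^n w_{t,k}^2 \;=\; \sum_{t=1}^n f^2\!\left(\frac{s_{t,k}}{s_0}\right)\frac{s_{t,k}-s_{t-1,k}}{s_0},
\end{equation*}
a right-endpoint Riemann sum of mesh $1/s_0$ for $\int_{1}^{s_{n,k}/s_0}f^2(x)\,dx$. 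Since $\int_1^\infty f^2=1$ and $s_0/s_{n,k}=o_p(1)$, the integral tends in probability to $1$; this corresponds to the third summand of~\eqref{eq:1d-clt-cond}. The Riemann-sum discretization error is controlled by the multiplicative ratio of consecutive $f$-values, i.e.\ by the quantity $\max_t\bigl(1-f(s_{t,k}/s_0)/f(s_{t-1,k}/s_0)\bigr)$ appearing as the middle summand of~\eqref{eq:1d-clt-cond}; monotonicity of $f$ lets one telescope, and the bounded target integral serves as a clean majorant. This verifies $\sum_t w_{t,k}^2\to 1$ in probability.

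\paragraph{Concluding CLT and the equivalent form.} With the two scalar stability conditions in hand, the moment bound~\eqref{eq:noise-cond} supplies a conditional Lindeberg condition, so the scalar martingale CLT delivers $\sigma^{-1}\sum_t w_{t,k}\eps_t\condi\NORMAL(0,1)$. Dividing by $\widehat\sigma$ and multiplying through by $\sum_t w_{t,k}x_{t,k}$, an application of Slutsky gives~\eqref{eq:1d-normality-simple}. To obtain the deterministic-normalization form~\eqref{eq:1d-normality}, apply the same Riemann-sum analysis to $\sum_t w_{t,k}x_{t,k}=\sum_t f(s_{t,k}/s_0)(s_{t,k}-s_{t-1,k})/\sqrt{s_0}$, showing $\sum_t w_{t,k}x_{t,k}=\sqrt{s_0}\int_1^{s_{n,k}/s_0}f(x)\,dx\cdot(1+o_p(1))$, and invoke Slutsky once more.

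\paragraph{Main obstacle.} The delicate step is the Riemann-sum control for the denominator: since $\int_1^\infty f=\infty$, one needs a \emph{relative} (multiplicative), not merely additive, error bound, valid over the unbounded range $[1,s_{n,k}/s_0]$. This forces one to show that the maximum relative jump $\max_t\bigl(1-f(s_{t,k}/s_0)/f(s_{t-1,k}/s_0)\bigr)$ is $o_p(1)$—precisely the content of the middle term of~\eqref{eq:1d-clt-cond}—using monotonicity and smoothness of $f$ together with the hypotheses $1/s_0\to 0$ and $s_0/s_{n,k}\to 0$. This is where the structural assumptions~\eqref{eq:f-property} and the regularity baked into the construction~\eqref{eqn:weights-bandits} do their real work.
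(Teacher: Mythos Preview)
Your proposal is correct and follows essentially the same route as the paper: reduce to the scalar equation, verify $\max_t w_{t,k}^2=o_p(1)$ via $f^2(1)/s_0$, show $\sum_t w_{t,k}^2\to 1$ and $\sum_t w_{t,k}x_{t,k}=\sqrt{s_0}\int_1^{s_{n,k}/s_0}f\cdot(1+o_p(1))$ by comparing the Riemann sums to the integrals (the paper phrases this via the mean-value theorem, writing $\int_{s_{t-1}/s_0}^{s_t/s_0}f=f(\xi_t/s_0)\,x_t^2/s_0$, but the controlling quantity is the same ratio $\max_t\bigl(1-f(s_t/s_0)/f(s_{t-1}/s_0)\bigr)$ from~\eqref{eq:1d-clt-cond}), and conclude with the martingale CLT plus Slutsky. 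Your identification of the relative-error control on the denominator as the crux is exactly right; the paper handles it by the bound $\max_t\bigl(1-f(s_t/s_0)/f(s_{t-1}/s_0)\bigr)\le 1-f(1+1/s_0)/f(1)$, which uses the bandit fact $s_t-s_{t-1}\in\{0,1\}$ together with regularity of $f$ near $1$.
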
 
The proof of Theorem~\ref{theo:1d-clt} can be found in Section~\ref{app:proof-theo-1} of the Appendix. A few comments regarding Theorem~\ref{theo:1d-clt} are in order.

First, the above theorem enables us to construct valid CI in the estimation of the mean $\theta^*_k$ for a sub-optimal arm $k$  when employing an asymptotically optimal allocation rule to achieve the optimal regret in~\cite{lai1985asymptotically} with sample size $ \sum_{ t \leq n}x_{t,k} \asymp \log n$, or when using a sub-optimal rule to achieve $\text{polylog}(n)$. On the other hand, the classical martingale CLT is applicable to the optimal arm (if unique) under such asymptotically optimal or sub-optimal allocation rules. Consequently, one may obtain a valid CI for the optimal arm from the standard OLS estimate~\cite{Lai1982}. However, it is important to note that such CIs are not guaranteed for sub-optimal arms.

Next, while Theorem~\ref{theo:1d-clt} holds for any $s_0$ diverging to infinity but of smaller order than $s_{n,k}$ ( which may depend on $k$), the convergence rate of $\sum_{1\leq t \leq n} w_{t, k} \eps_t$ to normality is enhanced by choosing a large value for $s_0$. In practical terms, it is advisable to choose an $s_0$ that is slightly smaller than the best-known lower bound for $s_{n,k}$. 
    
Finally, the choice of function $f$ determines the efficiency of ALEE estimator. For instance, taking function $f(x) = 1/x$, we  obtain an estimator with asymptotic variance of order  $ 1/\{s_0 \log^2(s_{n,k}/s_0 )\}$, which is only better than what one would get using stopping time results by a logarithmic factor. In the next Corollary, an improved choice of $f$ yields near optimal variance up to logarithmic terms. 
    %


\begin{corollary}
\label{cor:olee-optimal-variance}
Consider the same set of assumptions as stated in Theorem~\ref{theo:1d-clt}. The ALEE estimator $\holeek$, obtained by using $f(x) = (\beta \log^{\beta}2)^{1/2} \{x(\log e^2x)(\log\log e^2 x)^{1 + \beta} \}^{-1/2}$ for any $\beta > 0$ satisfies
\begin{equation*}
\label{eq:olee-optimal-variance}
  \sqrt{\frac{4\beta(\log2)^\beta}{\log (s_{n,k}/s_0)\{\log\log (s_{n,k}/ s_0)\}^{1+\beta}}}
\cdot \frac{\sqrt{s_{n, k}}(\holeek-\theta^*)}{\widehat{\sigma}} \condi \NORMAL(0, 1).
 \end{equation*}
\end{corollary}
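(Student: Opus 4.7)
The plan is to apply Theorem~\ref{theo:1d-clt} with the prescribed $f$ and then evaluate the normalizing integral $\int_1^{s_{n,k}/s_0} f(x)\,dx$ asymptotically as $s_{n,k}/s_0 \to \infty$.

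First I would verify that $f$ satisfies \eqref{eq:f-property}. Setting $C := \sqrt{\beta(\log 2)^\beta}$, the substitution $u = \log(e^2 x)$ followed by $v = \log u$ reduces $\int_1^\infty f^2(x)\,dx$ to $C^2 \int_{\log 2}^\infty v^{-(1+\beta)}\,dv = C^2/[\beta(\log 2)^\beta] = 1$, and divergence of $\int_1^\infty f$ follows similarly. The remaining stability condition \eqref{eq:1d-clt-cond} is automatic in the bandit setup under the hypothesis that $1/s_0$ and $s_0/s_{n,k}$ tend to zero in probability, so Theorem~\ref{theo:1d-clt} applies.

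The core step is the large-$N$ asymptotic
\begin{equation*}
\int_1^N f(x)\,dx \;=\; (1+o(1)) \cdot \frac{2C\sqrt{N}}{\sqrt{\log N}\,(\log\log N)^{(1+\beta)/2}}.
\end{equation*}
To obtain it, I would introduce the antiderivative-like function
\begin{equation*}
g(x) := \sqrt{x}\,[\log(e^2 x)]^{-1/2}\,[\log\log(e^2 x)]^{-(1+\beta)/2},
\end{equation*}
and compute directly (writing $h = \log(e^2 x)$ and $H = \log h$) that
\begin{equation*}
2C\, g'(x) \;=\; f(x)\cdot \Big[\,1 - \frac{1}{h} - \frac{1+\beta}{h H}\,\Big].
\end{equation*}
Since the bracket tends to $1$, we get $\int_1^N f = 2C[g(N) - g(1)] + R_N$, with $|R_N|$ bounded by a constant times $\int_1^N f(x)/h(x)\,dx$. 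Iterating the same antiderivative identification once more, now applied to $\tilde g(x) := \sqrt{x}\,h(x)^{-3/2}\,H(x)^{-(1+\beta)/2}$, shows $R_N = O(g(N)/\log N) = o(g(N))$, which yields the displayed estimate.

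Substituting $N = s_{n,k}/s_0$ and using the identities $\sqrt{s_0}\sqrt{N} = \sqrt{s_{n,k}}$ and $(2C)^2 = 4\beta(\log 2)^\beta$, the CLT \eqref{eq:1d-normality} of Theorem~\ref{theo:1d-clt} transforms into the stated form after a final Slutsky step absorbing the $1 + o_p(1)$ factor. The chief obstacle is the asymptotic evaluation of $\int_1^N f$: the antiderivative $g$ is natural, but one must verify rigorously that the secondary terms produced by $g'$ integrate to strictly smaller order than $g(N)$, so that they do not perturb the leading constant $\sqrt{4\beta(\log 2)^\beta}$.
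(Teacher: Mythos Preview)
Your proposal is correct and follows exactly the route the paper indicates: the paper's entire proof is the single sentence ``follows directly from Theorem~\ref{theo:1d-clt},'' and what you have written is precisely the verification of \eqref{eq:f-property} together with the asymptotic evaluation of $\int_1^{s_{n,k}/s_0} f(x)\,dx$ needed to convert \eqref{eq:1d-normality} into the stated form. Your antiderivative computation with $g$ and the bound on the remainder via $\tilde g$ are sound and give the leading constant $2C=\sqrt{4\beta(\log 2)^\beta}$ exactly.
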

The proof of this corollary follows directly from Theorem~\ref{theo:1d-clt}. For $s_0=\log n/\log\log n$ in multi-armed bandits with asymptotically optimal allocations, $\log(s_{n,k}/s_0)=(1+o(1))\log\log s_{n,k}$. 
%



\subsubsection{Finite sample bounds for ALEE estimators}
\label{sec:Finite-sample}
One may also construct finite sample confidence intervals for each arm via applying concentration bounds. Indeed, for any arm $k \in K$, we have
\begin{align}
    \{\sum_{t = 1}^n w_{t, k} x_{t, k}\} \cdot \big(\holeek-\thetak \big) = \sum_{t = 1}^n w_{t, k} \eps_{t}.
\end{align}
Following the construction of $w_{t, k} \in \cF_{t - 1}$, the term $\sum_{t = 1}^n w_{t, k} \eps_{t}$ is amenable to concentration inequalities if we assume that the noise~$\eps_t$ is sub-Gaussian conditioned on $\cF_{t-1}$, i.e.
\newcommand{\sigmaSG}{\sigma_g}
\begin{equation}
    \label{eq:noise-sub-gaussian}
    \forall \lambda \in \R \qquad \qquad   \Exs[e^{\lambda \eps_t} \mid \cF_{t-1}] \leq e^{ \sigmaSG^2 \lambda^2 /2} .
\end{equation}

\begin{corollary}[Theorem 1 in \cite{abbasi2011improved}]
\label{cor:olee-finite-sample-1}
Suppose the sub-Gaussian noise condition~\eqref{eq:noise-sub-gaussian} is in force.  Then for any $\delta > 0$ and $\lambda_0 > 0$, the following bound holds  with probability at least $1- \delta$
\begin{align}
 \label{eq:olee-finite-sample-1} 
     \left|\sum_{t = 1}^n w_{t,k} x_{t,k}\right|\cdot | \holeek - \thetak| \leq \sigmaSG \sqrt{ (\lambda_0 + \sum_{t = 1}^n w_{t,k}^2)  \cdot \log\bigg( \frac{\lambda_0 + \sum_{t = 1}^n w_{t,k}^2 }{ \delta^2 \lambda_0 }  \bigg) }.
\end{align}
\end{corollary}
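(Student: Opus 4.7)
The plan is to reduce the claim to a direct application of the self-normalized martingale tail inequality of Abbasi-Yadkori, P\'al, and Szepesv\'ari~\cite{abbasi2011improved}. The error identity displayed just above the corollary,
\[
\Big(\sum_{t=1}^n w_{t,k}\, x_{t,k}\Big)\cdot(\holeek - \thetak)\;=\;\sum_{t=1}^n w_{t,k}\,\eps_t,
\]
already packages the estimation error as a scalar martingale $M_n := \sum_{t=1}^n w_{t,k}\eps_t$ divided by a predictable quantity, so the corollary is really an observation once the right concentration inequality is quoted.

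Set $V_n := \sum_{t=1}^n w_{t,k}^2$. Because $w_{t,k}\in\cF_{t-1}$, each summand of $M_n$ is an $\cF_t$-martingale difference, and the sub-Gaussian hypothesis~\eqref{eq:noise-sub-gaussian} pulls the weight out of the moment generating function:
\[
\Exs\!\big[\exp(\lambda\, w_{t,k}\eps_t)\,\big|\,\cF_{t-1}\big]\;=\;\Exs\!\big[\exp((\lambda w_{t,k})\eps_t)\,\big|\,\cF_{t-1}\big]\;\leq\;\exp\!\big(\tfrac12\lambda^2\sigmaSG^2 w_{t,k}^2\big),
\]
valid for every $\lambda\in\R$. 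Hence $w_{t,k}\eps_t$ is conditionally $\sigmaSG|w_{t,k}|$-sub-Gaussian with predictable variance proxy $\sigmaSG^2 w_{t,k}^2$. These are precisely the hypotheses of Theorem~1 of~\cite{abbasi2011improved}; its scalar specialization (taking dimension $d=1$ and regularizer $\lambda_0>0$) asserts that with probability at least $1-\delta$,
\[
M_n^2\;\leq\;\sigmaSG^2\,(\lambda_0 + V_n)\,\log\!\Big(\frac{\lambda_0 + V_n}{\delta^2\lambda_0}\Big).
\]
Taking square roots and combining with the error identity above yields~\eqref{eq:olee-finite-sample-1}.

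The only nontrivial ingredient is the method-of-mixtures construction inside the cited theorem: one builds a nonnegative supermartingale from $\exp\!\big(\lambda M_n - \tfrac12\lambda^2\sigmaSG^{2}V_n/\sigmaSG^{2}\big)$ by integrating $\lambda$ against a Gaussian prior with variance $\lambda_0^{-1}$, and then applies a maximal inequality (via a stopping-time argument, so the bound actually holds uniformly in $n$). I would not re-derive this. The only detail worth double-checking from scratch is that the sub-Gaussian parameter survives multiplication by the predictable weight $w_{t,k}$ and that the Gaussian prior's variance $\lambda_0^{-1}$ indeed produces the additive $\lambda_0$ inside both the variance proxy and the logarithm -- both are routine and fully written out in~\cite{abbasi2011improved}.
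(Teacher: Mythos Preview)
Your proposal is correct and matches the paper's own treatment: the paper simply states that the corollary is a direct consequence of Theorem~1 in~\cite{abbasi2011improved}, and your reduction via the error identity $\big(\sum_t w_{t,k}x_{t,k}\big)(\holeek-\thetak)=\sum_t w_{t,k}\eps_t$ followed by the one-dimensional self-normalized bound with regularizer $\lambda_0$ is exactly that direct consequence spelled out.
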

\begin{remark}
\label{rem:olee-finite-sample}
In the context of multi-armed bandit, by considering the function $f$ in Corollary~\ref{eq:olee-optimal-variance} with $\beta = 1$ and Corollary~\ref{cor:olee-finite-sample-1} with $\lambda_0 = 1$, we derive that with probability at least $1 - \delta$ 
\begin{equation}
\label{eq:olee-finite-sample-2}
    | \holeek - \thetak| \leq \sigmaSG \sqrt{ \log( 2/\delta^2 }   )   \frac{ \sqrt{2 + \log(s_{n,k}/s_0)} \log\{2 + \log(s_{n,k}/s_0)\} }{\sqrt{s_{n,k}} - \sqrt{s_0}}
\end{equation}
provided $s_0 > 1$. See Section~\ref{app:remark-alee-finite} of the Appendix for a proof of this argument. Recall that $ \sqrt{s_{n, k}}  = (s_{0} + \sum_{i \leq n} x_{i, k}^2)^{1/2}$, the bound is in the same spirit as existing finite sample bounds for the OLS estimator for arm means~\cite{abbasi2011improved,lattimore2017end}.
In simple terms, the ALEE estimator behaves similarly to the OLS estimator in a non-asymptotic setting while still maintaining asymptotic normality.
\end{remark}

\subsection{Autoregressive time series}
\label{sec:AR}
Next, we focus on an autoregressive time series model
\begin{equation}
\label{eq:ar}
y_t = \theta^* y_{t-1}+ \epsilon_t \quad \text { for } t\in[n],
\end{equation}
where $y_0 = 0$. Note that the above model is a special case of the adaptive linear model~\eqref{eqn:LM}. It is well-known that when $\theta^* \in (-1, 1)$, the time series model~\eqref{eq:ar} satisfies a stability assumption~\eqref{eqn:Lai-stability}. Consequently, one might use the OLS estimate based confidence intervals~\cite{Lai1982} for $\theta^*$. However, when $\theta^* = 1$ --- also known as the \emph{unit root} case --- stability condition~\eqref{eqn:Lai-stability} does not hold, and the least squares estimator is not asymptotically normal~\cite{Lai1982}. In other words, when $\theta^* = 1$, the least squares based intervals do not provide correct coverage. 

In this section, we apply ALEE-based approach to construct confidence intervals that are valid for $\theta^* \in [-1, 1]$. Similar to previous sections, let $s_0\in\cF_{0}$ and denote $s_t = s_0 + \sum_{1\leq i\leq t} y_{i-1}^2$.
Following a construction similar to the last section, we have the following corollary. 
\begin{corollary}
\label{cor:ar-clt}
Assume the noise variables $\{\eps_t\}_t$ are i.i.d with mean zero, variance $\sigma^2$ and sub-Gaussian parameter $\sigmaSG^2$. Then, for any $\theta^* \in [-1, 1]$, 
the ALEE estimator, obtained using $w_t = f(s_t/s_0) y_{t-1}/\sqrt{s_0}$ with function $f$ from Corollary~\ref{cor:olee-optimal-variance} and $s_0 = n/\log\log(n)$,  satisfies
\begin{equation}
\label{eq:1d-normality-ar}
 \sqrt{\frac{4\beta(\log2)^\beta}{\log (s_{n}/s_0)\{\log\log (s_{n}/ s_0)\}^{1+\beta}}}
\cdot \frac{\sqrt{s_{n}}(\holee-\theta^*)}{\widehat{\sigma}} \condi \NORMAL(0, 1).
\end{equation}
\end{corollary}

The proof of Corollary~\ref{cor:ar-clt} can be found in Section~\ref{app:ar-clt-proof} of the Appendix.

\subsection{Contextual bandits}
\label{sec:Contextual-bandit}
In contextual bandit problems, the task of defining adaptive weights that satisfy the stability condition~\eqref{eq:w_stability} while maintaining a large affinity is challenging. Without loss of generality, we assume that $\| \bx_t\|_2\leq 1$. Following the discussion around~\eqref{eqn:cosine-intuition} and using $\bS_t$ as an approximation of $\bS_n$, we see that a good choice for the weight is $\bw_t \approx \bS_{t}^{-\frac{1}{2}}  \bx_t$. However, it is not all clear at the moment why the above choice produces $d-$dimensional weights $\bw_t$ satisfying the stability condition~\eqref{eq:w_stability}. It turns out that the success of our construction is based on the variability of certain matrix $\bV_t$.  For a 
${\cal F}_0$-measurable $d\times d$ symmetric matrix $\bSigma_0 \succeq \bI_d$ 
and $t \in [n]$, we define
\begin{align}\label{z_t}
    \bSigma_t = \bSigma_0 + \sum_{i = 1}^t \bx_i \bx_i^\top \qquad \text{and} \qquad  \bz_t =
    \bSigma_{t - 1}^{-\frac{1}{2}} \bx_t. 
\end{align}
For $t\in[n]$, we define the variability matrix $\bV_t$ as 
\newcommand{\tr}{\text{trace}}
\begin{align}
\label{eqn:variability-matrix}
    \bV_t = \bigg(\bI_d + \sum_{i = 1}^t   
    \bz_i\bz_i^\top \bigg)^{-1} \qquad \texttt{(Variability)} .
\end{align}
The variability matrix $\bV_t$ comes up frequently in finite sample analysis of the least squares estimator~~\cite{lai1979adaptive,Lai1982,Abbasi2011}, the generalized linear models with adaptive data~\cite{Li2017}, and in online optimization~\cite{dani2008stochastic}; see comments after Theorem~\ref{thm:stability-conteual-bandits} for a more detailed discussion on the matrix $\bV_t$. Now, we define weights $\{\bw_t\}_{t \geq 1}$ as 
\begin{align}
\label{eqn:weights-context}
    \bw_t = \sqrt{1 + \bz_t^\top \bV_{t-1} \bz_t} \cdot \bV_t \bz_t .
\end{align}

\begin{theorem}
\label{thm:stability-conteual-bandits}
Suppose condition~\eqref{eq:noise-cond} holds and $\|\bSigma_0^{-1}\|_{\op} + \|\bV_n\|_{\op} = o_p(1)$. Then, the ALEE estimator $\hbolee$, obtained using the weights $\{\bw_t\}_{1\leq t\leq n}$ from~\eqref{eqn:weights-context}, satisfies 
\begin{align*}
     \left(\sum_{t = 1}^n \bw_t \bx_t \right) \cdot (\hbolee - \btheta^*) \overset{d}{\longrightarrow} \NORMAL\big({\bf 0},  \sigma^2 \bI_d\big).
\end{align*}
\end{theorem}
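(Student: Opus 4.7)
The plan is to verify the two stability conditions in~\eqref{eq:w_stability} for the weights~\eqref{eqn:weights-context} and then invoke Proposition~\ref{prop:affinity}. The central observation is an algebraic identity engineered into the definition of $\bw_t$: applying Sherman--Morrison to $\bV_t^{-1} = \bV_{t-1}^{-1} + \bz_t \bz_t^\top$ yields both $\bV_t \bz_t = \bV_{t-1}\bz_t / (1 + \bz_t^\top \bV_{t-1} \bz_t)$ and $\bV_{t-1} - \bV_t = \bV_{t-1}\bz_t \bz_t^\top \bV_{t-1}/(1 + \bz_t^\top \bV_{t-1} \bz_t)$, so that
\[
\bw_t \bw_t^\top = (1 + \bz_t^\top \bV_{t-1} \bz_t)\,\bV_t \bz_t \bz_t^\top \bV_t = \frac{\bV_{t-1}\bz_t \bz_t^\top \bV_{t-1}}{1 + \bz_t^\top \bV_{t-1} \bz_t} = \bV_{t-1} - \bV_t.
\]
Summing this telescope gives $\bW_n^\top \bW_n = \sum_{t=1}^n \bw_t \bw_t^\top = \bV_0 - \bV_n = \bI_d - \bV_n$; the normalizing factor $\sqrt{1+\bz_t^\top \bV_{t-1}\bz_t}$ in $\bw_t$ is calibrated precisely so that $\bW_n^\top \bW_n$ is close to the identity.

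With this identity in hand, the second stability condition is immediate: $\|\bI_d - \bW_n^\top \bW_n\|_{\op} = \|\bV_n\|_{\op} = 1/\lambda_{\min}(\bV_n^{-1}) = o_p(1)$ by hypothesis. For the first condition, I use $\|\bx_t\|_2 \leq 1$ and $\bSigma_{t-1} \succeq \bSigma_0$ to obtain $\|\bz_t\|_2^2 = \bx_t^\top \bSigma_{t-1}^{-1}\bx_t \leq 1/\lambda_{\min}(\bSigma_0)$ uniformly in $t$; then, using $\|\bV_t\|_{\op}\leq 1$,
\[
\max_{t \leq n}\|\bw_t\|_2^2 \;\leq\; \max_{t \leq n}(1 + \|\bz_t\|_2^2)\,\|\bz_t\|_2^2 \;\leq\; \frac{1 + 1/\lambda_{\min}(\bSigma_0)}{\lambda_{\min}(\bSigma_0)} \;\conp\; 0.
\]

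Finally, I translate Proposition~\ref{prop:affinity} to the stated form. Writing the SVD $\bW_n = \bU_w\bLambda_w \bV_w^\top$, the proposition yields $\sigma^{-1}\bA_w(\hbolee - \btheta^*) \overset{d}{\to} \NORMAL(\mathbf{0}, \bI_d)$ for $\bA_w = \bV_w\bU_w^\top \bX_n$, while $\sum_t \bw_t \bx_t = \bW_n^\top \bX_n = (\bV_w\bLambda_w \bV_w^\top)\bA_w$ and $(\bW_n^\top \bW_n)^{1/2} = \bV_w \bLambda_w \bV_w^\top \conp \bI_d$ by the second stability condition. Slutsky's theorem then delivers the claimed CLT. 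The main obstacle is purely conceptual: spotting that the seemingly mysterious factor in $\bw_t$ is tuned to produce the Sherman--Morrison telescope $\bw_t\bw_t^\top = \bV_{t-1}-\bV_t$. Once this identity is recognized, both stability hypotheses reduce directly to the two assumed asymptotic conditions on $\lambda_{\min}(\bSigma_0)$ and $\lambda_{\min}(\bV_n^{-1})$, and the rest of the argument is standard.
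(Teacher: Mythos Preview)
Your proposal is correct and follows essentially the same route as the paper: the Sherman--Morrison telescope $\bw_t\bw_t^\top=\bV_{t-1}-\bV_t$ giving $\bW_n^\top\bW_n=\bI_d-\bV_n$, the bound $\|\bz_t\|_2^2\le 1/\lambda_{\min}(\bSigma_0)$ to control $\max_t\|\bw_t\|_2$, and then Proposition~\ref{prop:affinity}. Your final paragraph explicitly bridges the $\bA_w$-conclusion of Proposition~\ref{prop:affinity} to the $\bW_n^\top\bX_n$-form stated in the theorem via $(\bW_n^\top\bW_n)^{1/2}\conp\bI_d$ and Slutsky, a step the paper leaves implicit; otherwise the arguments coincide.
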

The proof of Theorem~\ref{thm:stability-conteual-bandits} can be found in Section~\ref{app:stability-conteual-bandits} of the Appendix.
In Theorem~\ref{thm:Improved-stability-conteual-bandits} in the appendix, we establish the asymptotic normality of a modified version of the ALEE estimator, which has the same asymptotic variance as the one in Theorem~\ref{thm:stability-conteual-bandits} under the assumption 
$\|\bSigma_0^{-1}\|_{\op} = o_p(1)$. In other words, the modified theorem~\ref{thm:Improved-stability-conteual-bandits} does not assume any condition on the $\|\bV_n\|_{\op}$. 

To better convey the idea of our construction, we provide a lemma that may be of independent interest. This lemma applies to weights $\bw_t$ generated by  \eqref{eqn:variability-matrix} and \eqref{eqn:weights-context} with general $\bz_t$. 

\begin{lemma} 
\label{lem:lemma-38}
Let $\bw_t$ be as in \eqref{eqn:weights-context} with the variability matrix $\bV_t$ in \eqref{eqn:variability-matrix}. Then, 
\begin{align}
\label{eq:lemma-38}
\sum_{t = 1}^n  \bw_t \bw_t^\top 
= \bI_d - \bV_n,\quad 
\max_{1\le t\le n}\|\bw_t\|_2
=\max_{1\le t\le n}\|\bV_{t-1}\bz_t\|_2/(1+\bz_t^\top\bV_{t-1}\bz_t)^{1/2}. 
\end{align}
For $\bz_t\in{\cal F}_{t-1}$, the stability condition 
\eqref{eq:w_stability} holds 
when $\max_{1\leq t\le n} \bz_t^\top\bV_t\bz_t 
+ \|\bV_n\|_{\op} = o_p(1)$.
\end{lemma}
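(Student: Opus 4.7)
The plan is to reduce everything to the Sherman--Morrison rank-one update formula applied to the recursion $\bV_t^{-1} = \bV_{t-1}^{-1} + \bz_t \bz_t^\top$. This gives the two crucial identities
\[
\bV_t \;=\; \bV_{t-1} \;-\; \frac{\bV_{t-1}\bz_t\bz_t^\top \bV_{t-1}}{1 + \bz_t^\top \bV_{t-1} \bz_t},
\qquad
\bV_t \bz_t \;=\; \frac{\bV_{t-1} \bz_t}{1 + \bz_t^\top \bV_{t-1} \bz_t},
\]
where the second follows by applying the first to $\bz_t$ and simplifying. These are routine manipulations, not the main obstacle; the real content of the lemma is recognizing that the particular rescaling $\sqrt{1 + \bz_t^\top \bV_{t-1} \bz_t}$ in the definition of $\bw_t$ is exactly what forces a telescoping decomposition.

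First I would verify the norm identity. Using $\bw_t = (1+\bz_t^\top \bV_{t-1}\bz_t)^{1/2} \bV_t \bz_t$ together with the Sherman--Morrison expression for $\bV_t \bz_t$,
\[
\|\bw_t\|_2 \;=\; (1+\bz_t^\top \bV_{t-1}\bz_t)^{1/2}\,\frac{\|\bV_{t-1}\bz_t\|_2}{1+\bz_t^\top \bV_{t-1}\bz_t}
\;=\; \frac{\|\bV_{t-1}\bz_t\|_2}{(1+\bz_t^\top \bV_{t-1}\bz_t)^{1/2}},
\]
which is the second equality of \eqref{eq:lemma-38}. Next, for the sum, compute
\[
\bw_t \bw_t^\top \;=\; (1+\bz_t^\top \bV_{t-1}\bz_t)\,\bV_t \bz_t \bz_t^\top \bV_t
\;=\; \frac{\bV_{t-1}\bz_t \bz_t^\top \bV_{t-1}}{1+\bz_t^\top \bV_{t-1}\bz_t}
\;=\; \bV_{t-1} - \bV_t,
\]
where the last step is precisely Sherman--Morrison. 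Summing $t=1,\ldots,n$ telescopes to $\bV_0 - \bV_n = \bI_d - \bV_n$, since $\bV_0 = \bI_d$ by the definition \eqref{eqn:variability-matrix}. This establishes both parts of \eqref{eq:lemma-38}.

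Finally, I would derive the stability condition \eqref{eq:w_stability} from the assumption $\max_t \bz_t^\top \bV_t \bz_t + \|\bV_n\|_{\op} = o_p(1)$. The second condition $\|\bI_d - \bW_n^\top \bW_n\|_{\op} = o_p(1)$ is immediate from the identity $\sum_t \bw_t \bw_t^\top = \bI_d - \bV_n$ just proved, since the spectral norm of $\bW_n^\top \bW_n$ equals that of $\bW_n \bW_n^\top = \sum_t \bw_t \bw_t^\top$. For the uniform smallness of $\|\bw_t\|_2$, observe that $\bV_{t-1}^{-1} \succeq \bI_d$ implies $\bV_{t-1} \preceq \bI_d$, so its eigenvalues lie in $[0,1]$ and $\bV_{t-1}^2 \preceq \bV_{t-1}$. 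Therefore $\|\bV_{t-1} \bz_t\|_2^2 = \bz_t^\top \bV_{t-1}^2 \bz_t \leq \bz_t^\top \bV_{t-1} \bz_t$, which combined with the norm identity yields
\[
\|\bw_t\|_2^2 \;\leq\; \frac{\bz_t^\top \bV_{t-1} \bz_t}{1 + \bz_t^\top \bV_{t-1} \bz_t} \;=\; \bz_t^\top \bV_t \bz_t,
\]
where the last equality follows from $\bV_t\bz_t = \bV_{t-1}\bz_t/(1+\bz_t^\top \bV_{t-1}\bz_t)$. Taking the maximum over $t$ and applying the hypothesis gives $\max_t \|\bw_t\|_2 = o_p(1)$, completing the proof.

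The only step requiring care is the eigenvalue comparison $\bV_{t-1}^2 \preceq \bV_{t-1}$; everything else is algebraic manipulation of the Sherman--Morrison identity. I do not foresee a real obstacle here, since the lemma is essentially the statement that the construction \eqref{eqn:weights-context} is engineered precisely to make the telescoping work.
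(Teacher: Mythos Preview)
Your proof is correct and follows essentially the same Sherman--Morrison telescoping argument as the paper: both derive $\bV_t\bz_t = \bV_{t-1}\bz_t/(1+\bz_t^\top\bV_{t-1}\bz_t)$ and recognize that $\bw_t\bw_t^\top = \bV_{t-1}-\bV_t$ (the paper writes this as $\bV_{t-1}(\bV_t^{-1}-\bV_{t-1}^{-1})\bV_t$, which is the same thing). One minor notational slip: with the paper's convention $\bW_n=(\bw_1,\ldots,\bw_n)^\top$, the $d\times d$ matrix $\bW_n^\top\bW_n$ already \emph{equals} $\sum_t \bw_t\bw_t^\top$, so your appeal to $\bW_n\bW_n^\top$ is unnecessary; and for the stability bound, your route via $\bV_{t-1}^2\preceq\bV_{t-1}$ giving $\|\bw_t\|_2^2\leq \bz_t^\top\bV_t\bz_t$ is actually cleaner and more directly aligned with the lemma's hypothesis than the bound $\|\bw_t\|_2\leq \sqrt{1+\bz_t^\top\bV_{t-1}\bz_t}\,\|\bV_t\|_{\op}\|\bz_t\|_2$ used in the paper's appendix.
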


\paragraph{Proof.}  For any $t \geq 1$, 
$\bV_t = \bV_{t-1} - \bV_{t-1}\bz_t\bz_t^\top \bV_{t-1}/(1+\bz_t^\top \bV_{t-1}\bz_t)$. It follows that 
$\bV_t\bz_t = \bV_{t-1}\bz_t/(1+\bz_t^\top\bV_{t-1}\bz_t)$ and 
$\sum_{t = 1}^n  \bw_t \bw_t^\top 
= \sum_{t=1}^n \bV_{t-1}(\bV_t^{-1}-\bV^{-1}_{t-1})\bV_t = \bI_d - \bV_n$.

\paragraph{Comments on Theorem~\ref{thm:stability-conteual-bandits} conditions:} It is instructive to compare the conditions of Theorem~\ref{theo:1d-clt} and Theorem~\ref{thm:stability-conteual-bandits}. The condition  $\|\bSigma_0^{-1}\|_{\op} = o_p(1)$ is an analogue of the condition $1/s_0 = o_p(1)$. The condition $\|\bV_n\|_{\op} =o_p(1)$ is a bit more subtle. This condition is an analogue of the condition $ s_0/s_{n, k} = o_p(1)$. Indeed, applying elliptical potential lemma~\cite[Lemma 4]{Abbasi2011} yields 
\begin{align}
\label{eqn:potential-lemma}
  \frac{\log(\det(\bSigma_0 + \bS_n ))}{\log(\det(\bSigma_0))} \leq   \text{trace}(\bV_n^{-1}) - d =  \sum_{t = 1}^n \bx_t^\top \bSigma_{t - 1}^{-1} \bx_t   
     \leq  2 \cdot \frac{\log(\det(\bSigma_0 + \bS_n ))}{\log(\det(\bSigma_0))} 
\end{align}
where $\bS_n = \sum_{i = 1}^n \bx_i \bx_i^\top$ is the Gram matrix. We see that for $\|\bV_n\|_{\op} =o_p(1)$, it is necessary that the eigenvalues of $\bS_n$ grow to infinity at a faster rate than the eigenvalues of $\bSigma_0$. Moreover, in the case of dimension $d =1$, the condition $\|\bV_n\|_{\op} =o_p(1)$ is equivalent to $s_0/s_{n, k} = o_p(1)$. 

\section{Numerical experiments}
\label{sec:numerical-exp}
In this section, we consider three settings: two-armed bandit setting, first order auto-regressive model setting and contextual bandit setting. In two-armed bandit setting, the rewards are generated with same arm mean $ (\theta_1^*, \theta_2^*) = (0.3, 0.3)$, and noise is generated from a normal distribution with mean $0$ and variance $1$. To collect two-armed bandit data, we use $\eps$-Greedy algorithm with decaying exploration rate $\sqrt{\log(t)/t}$. The rate is designed to make sure the number of times each armed is pulled has order greater than $\log(n)$ up to time $n$. In the second setting, we consider the time series model, 
\begin{equation}
\label{eq:ar-model}
y_t = \theta^* y_{t-1} + \eps_t,
\end{equation}
where $\theta^* = 1$ and noise $\eps_t$ is drawn from a normal distribution with mean $0$ and variance $1$. In the contextual bandit setting, we consider the true parameter $\btheta^*$ to be $0.3$ times the all-one vector. In the initial iterations, a random context $\bx_t$ is generated from a uniform distribution in $\mathcal{S}^{d-1}$. Then, we apply $\eps$-Greedy algorithm to these pre-selected contexts with decaying exploration rate $\log^2(t)/t$. For all of the above three settings, we run $1000$ independent replications.

\begin{figure}[!htbp]
  \centering
  \begin{minipage}[b]{0.32\textwidth}
    \centering
    \includegraphics[scale=0.23]{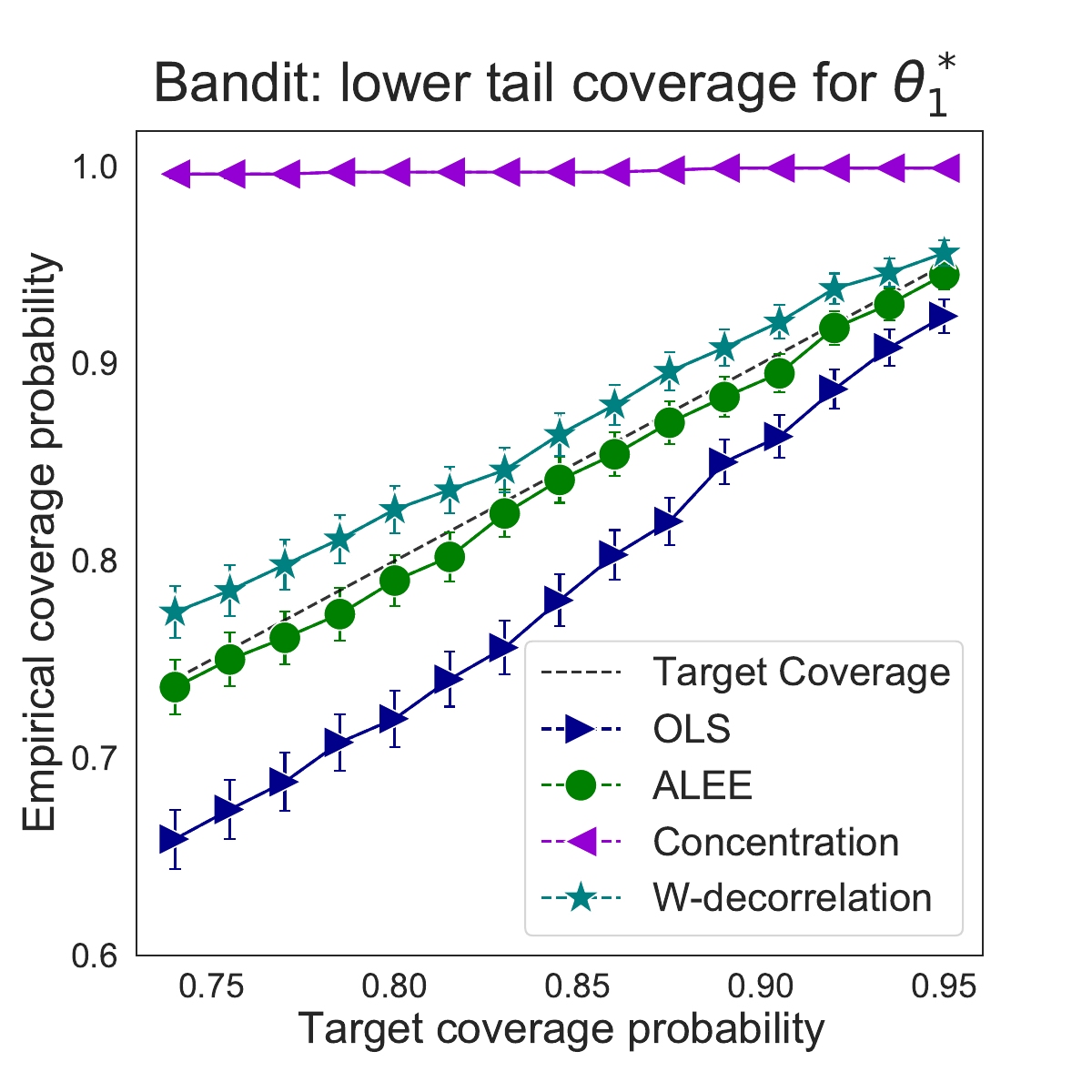}
  \end{minipage}
  \hfill
  \begin{minipage}[b]{0.32\textwidth}
    \centering
    \includegraphics[scale=0.23]{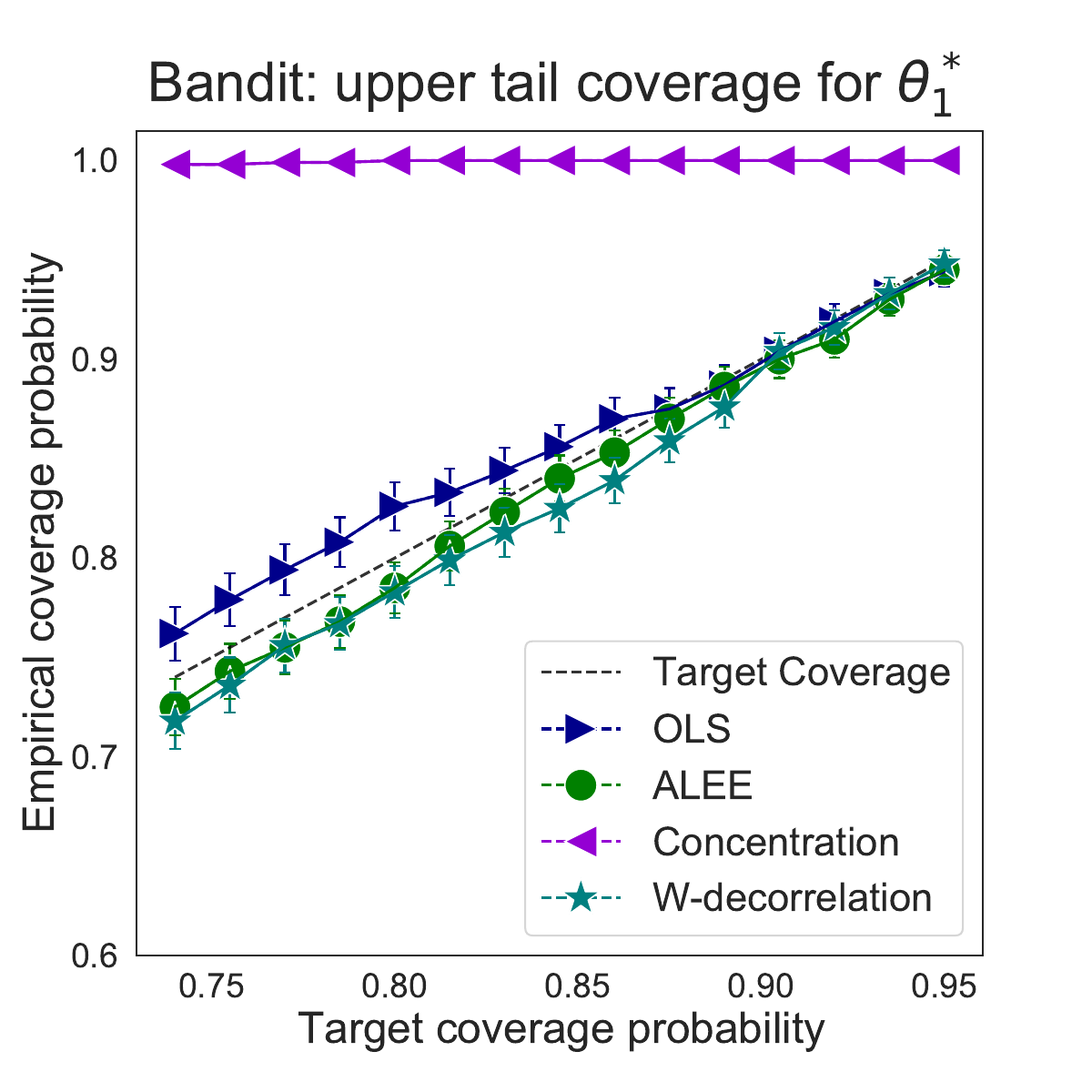}
  \end{minipage}
  \hfill
  \begin{minipage}[b]{0.32\textwidth}
    \centering
    \includegraphics[scale=0.23]{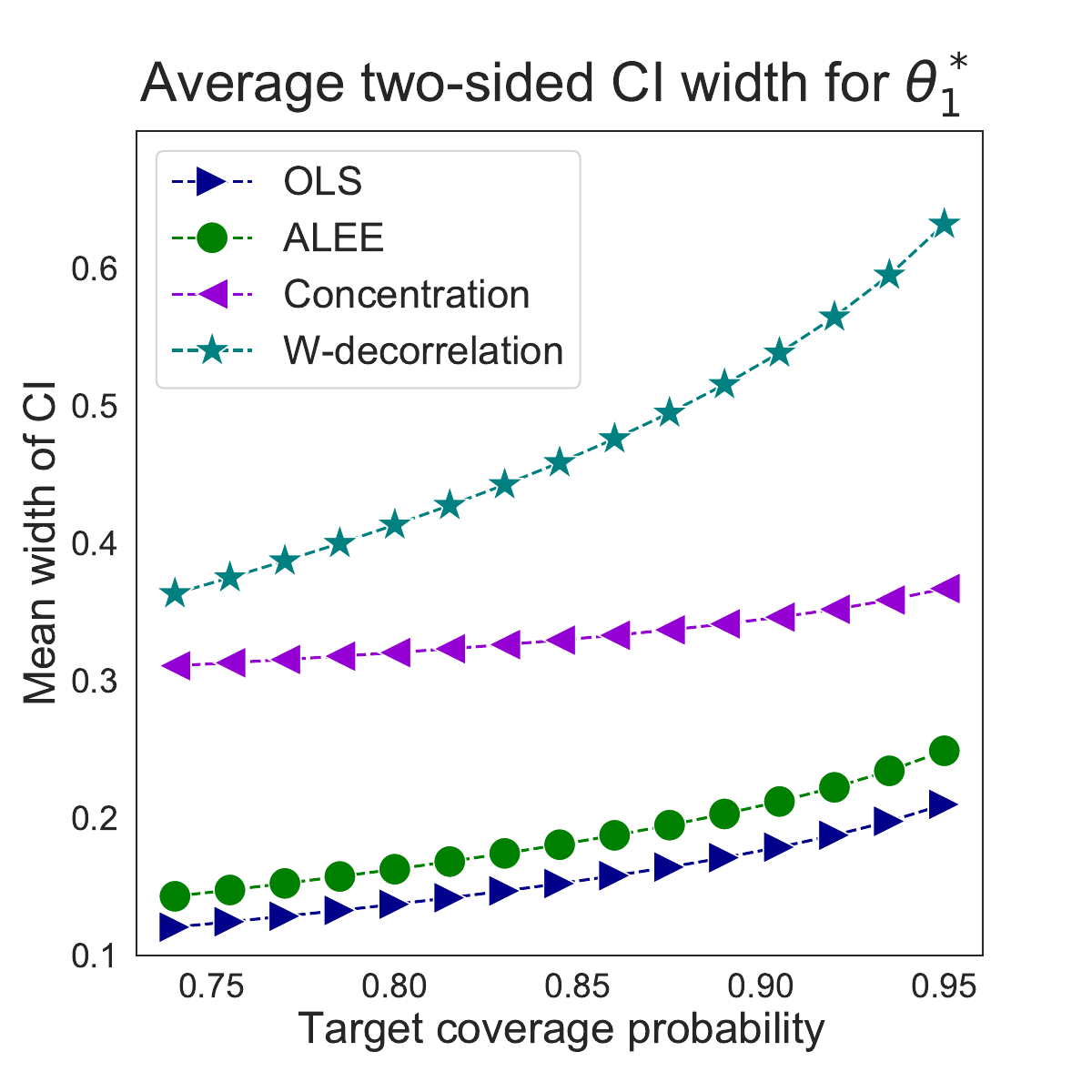}
  \end{minipage}
  \caption{Two-armed bandit problem with equal arm mean $\theta_1^* = \theta_2^* = 0.3$. Error bars plotted are $\pm$ standard 
 errors.}
  \label{fig:bandit-luw}
\end{figure}

To analyze the data we collect for these settings, we apply ALEE approach with weights specified in Corollary~\ref{cor:olee-optimal-variance},~\ref{cor:ar-clt} and Theorem~\ref{thm:Improved-stability-conteual-bandits}, respectively. More specifically, in the first two settings, we consider $\beta = 1$ in Corollary~\ref{cor:olee-optimal-variance}. For two-armed bandit example, we set $s_0 = e^2\log(n)$, which is known to be a lower bound for $s_{n,1}$. For AR(1) model, we consider $s_0 = e^2 n/\log\log(n)$. For the contextual bandit example, we consider $\bSigma_0 =  \log(n) \cdot\bI_d$. In the simulations, we also compare ALEE approach to the normality based confidence interval for OLS estimator~\cite{Lai1982}~(which may be incorrect), the concentration bounds for the OLS estimator based on self-normalized martingale sequence~\cite{abbasi2011improved}, and W-decorrelation~\cite{deshpande2018accurate}. Detailed implementations about these methods can be found in Appendix~\ref{app:simulation-details}. 

In Figure~\ref{fig:bandit-luw}, we display results for two-armed bandit example, providing the empirical coverage plots for the first arm mean $\theta_1^*$ as well as average width for two-sided CIs. We observe that CIs based on OLS undercover $\theta_1^*$ while other methods provide satisfactory coverage. Notably, from the average CI width plot, we can see that W-decorrelation and concentration methods have relatively large CI widths. On the contrary, ALEE-based CIs achieve target coverage while keeping the width of CIs relatively small.

For AR(1) model, we display the results in Figure~\ref{fig:ar-luw}. For the context bandit example, we consider $d = 20$ and summarize the empirical coverage probability and the logarithm of the volume of the confidence regions in Table~\ref{tab:contextual-bandit-20}, along with corresponding standard deviations. See Appendix~\ref{app:contextual-bandit-simulations} for experiments with dimension $d = 10$ and $d = 50$.

\begin{figure}[t]
  \centering
  \begin{minipage}[b]{0.32\textwidth}
    \centering
    \includegraphics[scale=0.23]{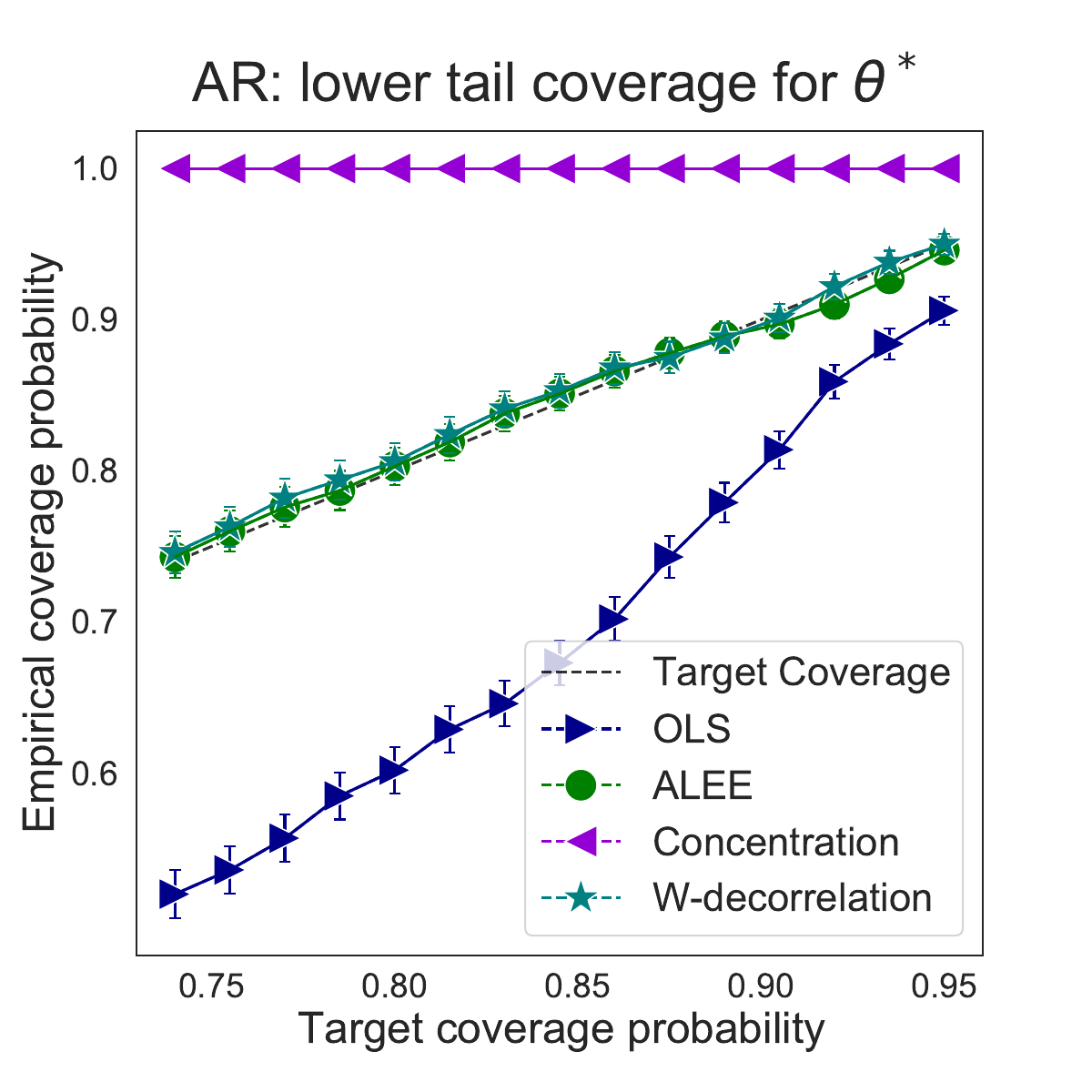}
    \label{fig:ar-lower}
  \end{minipage}
  \hfill
  \begin{minipage}[b]{0.32\textwidth}
    \centering
    \includegraphics[scale=0.23]{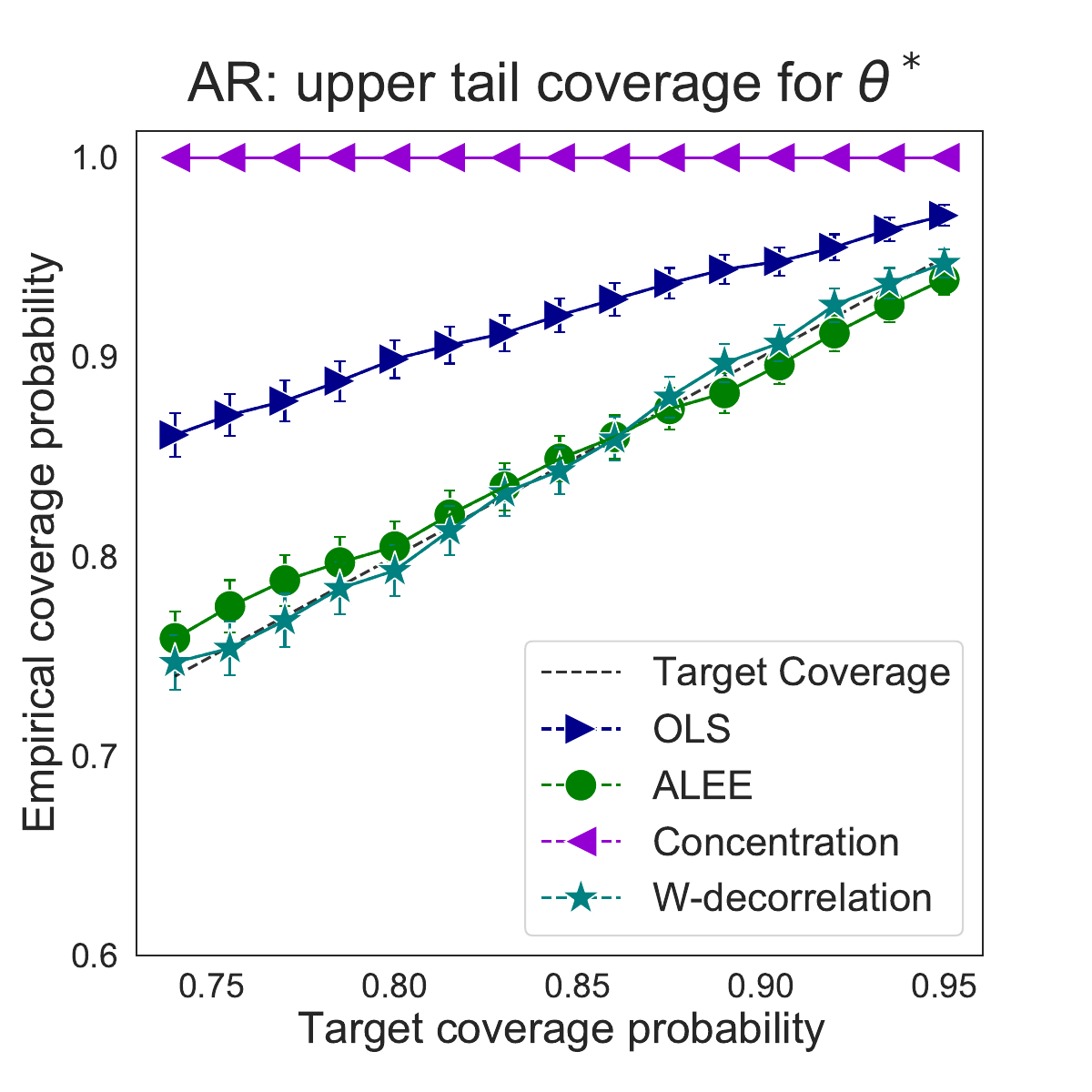}
    \label{fig:ar-upper}
  \end{minipage}
  \hfill
  \begin{minipage}[b]{0.32\textwidth}
    \centering
    \includegraphics[scale=0.23]{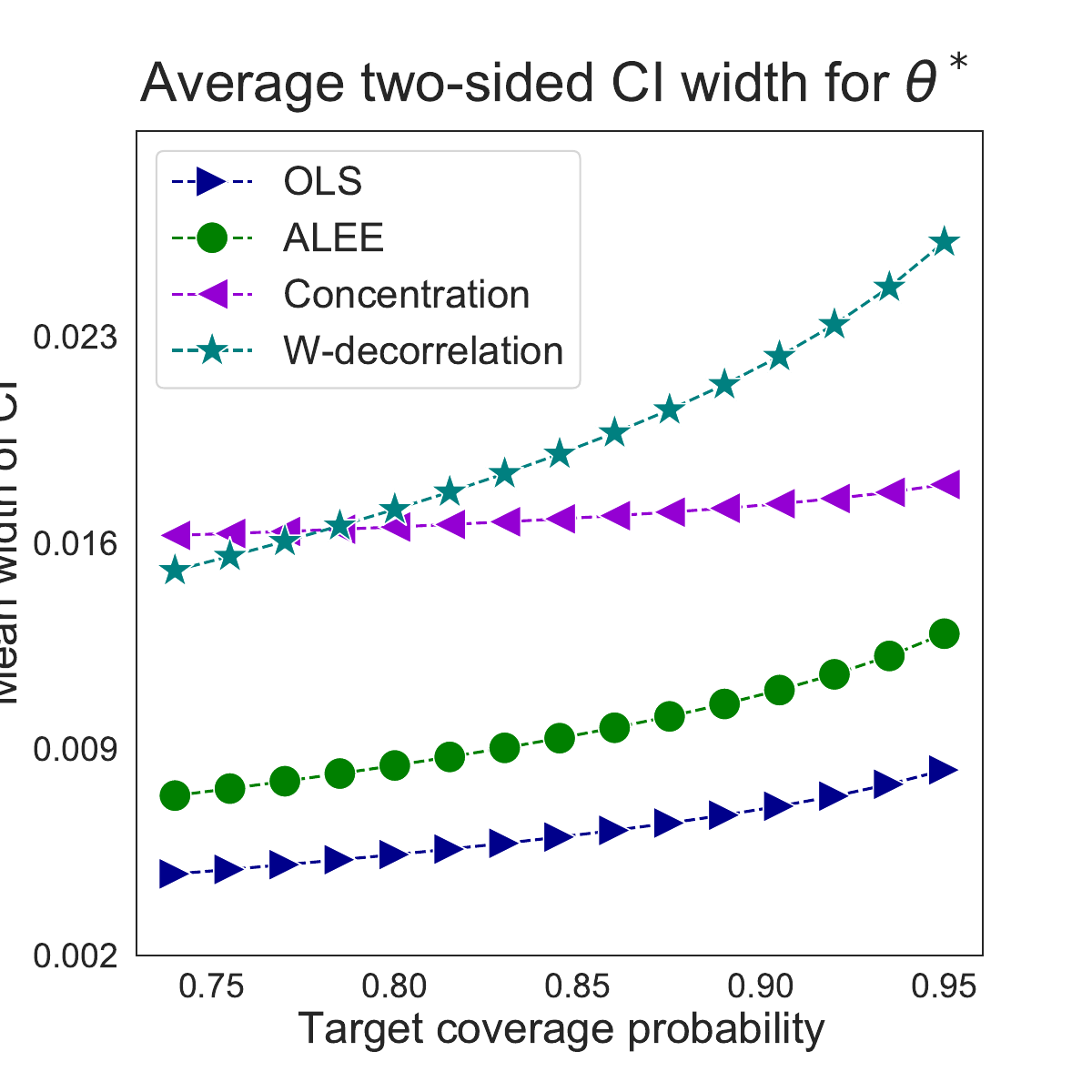}
    \label{fig:ar-width}
  \end{minipage}
  \caption{AR(1) with model coefficient $\theta^* = 1$ and $s_0 = e^2 n /\log\log(n)$. Error bars plotted are $\pm$ standard 
 errors.}
  \label{fig:ar-luw}
\end{figure}

\begin{table}[!h]\centering
\caption{Contextual bandit: d = 20}
\label{tab:contextual-bandit-20}
\scriptsize
\begin{adjustbox}{width=\columnwidth,center}
\begin{tabular}{lrrrrrrr}\toprule
Method &\multicolumn{6}{c}{Level of confidence} \\\cmidrule{1-7}
&\multicolumn{2}{c}{0.8} &\multicolumn{2}{c}{0.85} &\multicolumn{2}{c}{0.9} \\\midrule
&Avg. Coverage &Avg. log(Volumn) &Avg. Coverage &Avg. log(Volumn) &Avg. Coverage &Avg. log(Volumn) \\
ALEE & 0.805 ($\pm$  0.396) & 6.541 ($\pm$  0.528) & 0.861 ($\pm$ 0.346) & 7.108 ($\pm$ 0.528) & 0.910 ($\pm$  0.286) & 7.806 ($\pm$ 0.528) \\
OLS & 0.776 ($\pm$ 0.417) &-2.079 ($\pm$ 0.525) &0.830 ($\pm$ 0.376) &-1.513 ($\pm$ 0.525) &0.881 ($\pm$ 0.324) & -0.815 ($\pm$ 0.525) \\
W-Decorrelation & 0.777 ($\pm$ 0.416) & 25.727 ($\pm$ 0.518) &0.829 ($\pm$ 0.377) & 26.294 ($\pm$ 0.518) & 0.870 ($\pm$ 0.336) & 26.992 ($\pm$ 0.518) \\
Concentration &1.000 ($\pm$ 0.000) & 17.374 ($\pm$ 0.506) &1.000 ($\pm$ 0.000) &  17.408($\pm$ 0.506) &1.000 ($\pm$ 0.000) & 17.455 ($\pm$ 0.506) \\
\bottomrule
\end{tabular}
\end{adjustbox}
\end{table}

\section{Discussion}
In this paper, we study the parameter estimation problem in an adaptive linear model. We propose to use ALEE (adaptive linear estimation equations) to obtain point and interval estimates.  Our main contribution is to propose an estimator which is asymptotically normal without requiring any stability condition on the sample covariance matrix. 
Unlike the  concentration based confidence regions, our proposed confidence regions allow for heavy tailed noise variables. We demonstrate the utilitity of our method by comparing our method with existing methods. 

Our work leaves several questions open for future research. For example, it would be interesting to characterize the variance of the ALEE estimator compared to the best possible variance\cite{Khamaru2021,lattimore2023lower} for $d>1$. It would also be interesting to know if such results can be extended to non-linear adaptive models, e.g., to an adaptive generalized linear model~\cite{Li2017}. Furthermore, our paper assumes a fixed dimension $d$ for the problem while letting $n \rightarrow \infty$. It would be interesting to explore whether we can allow the dimension to grow with the number of samples at a specific rate.

\newpage
\section*{Acknowledgments}
This work was partially supported by the  National Science Foundation Grants DMS-2311304, CCF-1934924, DMS-2052949 and DMS-2210850.

\bibliographystyle{plain}
\small\bibliography{ref}

\newpage

\appendix

\addcontentsline{toc}{section}{Appendix} 
\part{Appendix} 
\parttoc{}

\section{Proof}
In Theorem~\ref{theo:1d-clt}, Corollary~\ref{cor:olee-finite-sample-1} and Remark~\ref{rem:olee-finite-sample}, we deal with an arm with index $k \in [K]$. To simplify notations, we drop the subscript $k$ in $s_{t,k}$, $w_{t,k}$, $x_{t,k}$, $\holeek$ and $\theta^*_k$ throughout the proof, and use $s_{t}$, $w_{t}$, $x_{t}$, $\holee$ and $\theta^*$, respectively.

\subsection{Proof of Theorem~\ref{theo:1d-clt}}
\label{app:proof-theo-1}
Condition~\eqref{eq:1d-clt-cond} serves as an important role in proving~\eqref{eq:1d-normality}. Therefore, we start our proof by verifying the condition~\eqref{eq:1d-clt-cond}. Since function $f$ is a positive decreasing function,  we first have
\begin{equation}
\label{eq:verify-normality-condition-1}
    \max_{1\leq t\leq n} f^2(\frac{s_t}{s_0})\frac{x_t^2}{s_0} \leq f^2(1) \frac{1}{s_0}.
\end{equation}
Furthermore, since function $f^\prime /f$ is increasing, we have 
\begin{equation}
    \label{eq:verify-normality-condition-2}
    \begin{split}
         &  \max_{1\leq t\leq n} \bigg( 1 - \frac{f(s_{t}/s_0)}{f(s_{t-1}/s_0)} \bigg)  =  \max_{1\leq t\leq n} \frac{f(s_{t-1}/s_0) -  f(s_{t}/s_0)}{f(s_{t-1}/s_0)} \\
         &  \stackrel{(i)}{\leq} \frac{1}{s_0}  \max_{1\leq t\leq n} \frac{ - f^\prime(s_{t-1}/s_0)}{f(s_{t-1}/s_0) } = \frac{1}{s_0} \frac{-f^\prime(1)}{f(1)},
    \end{split}
\end{equation}
where inequality $(i)$ follows from mean value theorem and the monotonicity of the function $f^\prime/f$.  Thus, by assuming $1/s_0 = o_p(1)$ and $s_0/s_{n} = o_p(1)$, condition~\eqref{eq:1d-clt-cond} follows directly from equation~\eqref{eq:verify-normality-condition-1} and equation~\eqref{eq:verify-normality-condition-2}. 

By the construction of ALEE estimator, we have 
\begin{equation}
    \bigg\{ \sum_{t = 1}^n w_t x_t\bigg\} \cdot (\holee - \theta^*) = \sum_{t = 1}^n w_t \eps_t.
\end{equation}
Note that
\begin{equation}
\sum_{t=1}^n w_t x_t =\sqrt{s_0}\sum_{t=1}^n f(s_t/s_0)\frac{x_t^2}{s_0} = \sqrt{s_0} \int_{1}^{s_n/s_0} f(x)dx \cdot \frac{\sum_{t\leq n} f(s_t/s_0)x_t^2 /s_0}{\int_{1}^{s_n/s_0} f(x)dx}.
\end{equation}
By the mean value theorem, we have that for $t\in [n]$, $\xi_t \in [s_{t-1}, s_t]$
\begin{equation*}
    \int_{s_{t-1}/s_0}^{s_t/s_0} f(x) dx = f(\xi_t/s_0) \frac{x_t^2}{s_0}.
\end{equation*}
Therefore, we have
\begin{equation*}
\frac{\sum_{t\leq n} f(s_t/s_0)x_t^2/s_0}{\int_{1}^{s_n/s_0} f(x)dx} = 1 + \underbrace{ \frac{\sum_{t\leq n}(\frac{f(s_t/s_0)}{f(\xi_t/s_0)}-1) f(\xi_t/s_0) x_t^2/s_0 }{ \sum_{t\leq n} f(\xi_t/s_0) x_t^2/s_0 } }_{\stackrel{\Delta}{=}  R}.
\end{equation*}
Observe that 
\begin{equation*}
\begin{split}
    |R| & \leq \frac{\sum_{t\leq n}|\frac{f(s_t/s_0)}{f(\xi_t/s_0)}-1| f(\xi_t/s_0) x_t^2/s_0 }{ \sum_{t\leq n} f(\xi_t/s_0) x_t^2/s_0} \\ 
    & \leq \frac{\sum_{t\leq n}|\frac{f(s_t/s_0)}{f(s_{t-1}/s_0)}-1| f(\xi_t/s_0) x_t^2/s_0 }{ \sum_{t\leq n} f(\xi_t/s_0) x_t^2/s_0} \\
    & \leq \max_{1 \leq t\leq n} \bigg(1 - \frac{f(s_t/s_0)}{f(s_{t-1}/s_0)}\bigg) \stackrel{(ii)}{=} o_p(1).
\end{split}
\end{equation*}
Equality $(ii)$ follows from condition~\eqref{eq:1d-clt-cond}. Consequently, applying Slutsky's theorem yields
\begin{equation*}
    \frac{\sum_{i  =1}^n w_t x_t}{ \sqrt{s_0} \int_{1}^{s_n/s_0} f(x)dx} \conp 1.
\end{equation*}
Similarly, we can derive 
\begin{equation}
\label{eq:1d-w2-integral}
\sum_{t=1}^n w_t^2 = \sum_{t=1}^n f^2(s_t/s_0)\frac{x_t^2}{s_0}
= (1+o_{p}(1))\int_{1}^{s_n/s_0} f^2(x)dx = 1+o_{p}(1). 
\end{equation}
Knowing $\max_{1\leq t\leq n} w_t^2 = \max_{1\leq t\leq n} f^2(s_t/s_0) x_t^2/s_0 = o_p(1)$, which is a consequence of equation~\eqref{eq:1d-clt-cond},  martingale central limit theorem together with an application of Slutsky's theorem yields
\begin{equation*}
    (\holee - \theta^*)  \cdot \int_{1}^{s_{n}/s_0} \frac{\sqrt{s_0}}{\widehat{\sigma}} f(x)dx \condi \NORMAL(0, 1).
\end{equation*}
Lastly, we recall that
\begin{equation*}
\frac{ \holee-\theta^* }{\widehat{\sigma} \sqrt{\sum_{ t \leq n} w_{t}^2} } \cdot \bigg(\sum_{ t =1}^n w_{t} x_{t} \bigg)  = \frac{1}{\widehat{\sigma} \sqrt{\sum_{ t \leq n} w_{t}^2} } \sum_{t = 1}^n w_t \eps_t.
\end{equation*}
Therefore, equation~\eqref{eq:1d-normality-simple} follows from martingale central limit theorem and Slutsky's theorem.

\begin{remark}
Equation~\eqref{eq:1d-normality} sheds light on the asymptotic variance of the ALEE estimator, thereby aiding in the selection of a suitable function f to improve the efficiency of ALEE estimator. On the other hand, equation~\eqref{eq:1d-normality-simple} offers a practical approach to obtaining an asymptotically precise confidence interval.
\end{remark}
\begin{remark}
Condition~\eqref{eq:1d-clt-cond} is a general requirement that governs equation~\eqref{eq:1d-normality}, and is not specific to bandit problems. However, the difficulty in verifying~\eqref{eq:1d-clt-cond} can vary depending on the problem at hand.
\end{remark}

\subsection{Proof of Remark~\ref{rem:olee-finite-sample}}
\label{app:remark-alee-finite}
Corollary~\ref{cor:olee-finite-sample-1} follows directly from Theorem 1 in~\cite{abbasi2011improved}. In this section, we provide a proof of Remark~\ref{rem:olee-finite-sample}. By considering $\lambda_0 = 1$ in Corollary~\ref{cor:olee-finite-sample-1}, we have with probability at least $1 - \delta$
\begin{equation}
     \left|\sum_{t = 1}^n w_{t} x_{t}\right|\cdot | \holee - \theta^*| \leq \sigmaSG \sqrt{ (1 + \sum_{t = 1}^n w_{t}^2)  \cdot \log\bigg( \frac{ 1 + \sum_{t = 1}^n w_{t}^2 }{ \delta^2  }  \bigg) }.
\end{equation}
By the construction of the weights in Corollary~\ref{cor:olee-optimal-variance}, we have 
\begin{equation}
    \sum_{t = 1}^n w_t^2 = \sum_{t = 1}^n f^2(\frac{s_t}{s_0}) \frac{x_t^2}{s_0} \leq \int_1^\infty f^2(x) dx =  1. 
\end{equation}
Therefore, to complete the proof, it suffices to characterize a lower bound for $\sum_{1\leq t\leq n} w_tx_t$. By definition, we have
\begin{equation}
\begin{split}
    \sum_{t = 1}^n w_t x_t & = \sum_{t = 1}^n f(s_t/s_0)\frac{x_t^2}{\sqrt{s_0}} \\
    & \stackrel{(i)}{=} \sum_{t=1}^n \frac{x_t^2}{(s_t \log(e^2 s_t/s_0))^{1/2} \log\log(e^2s_t/s_0) } \\
    & \geq \frac{1}{(2 + \log(s_n/s_0))^{1/2} \log(2 + \log(s_n/s_0))} \sum_{t=1}^n \frac{x_t^2}{\sqrt{s_t}}\\
    & \stackrel{(ii)}{\geq}  \frac{1}{(2 + \log(s_n/s_0))^{1/2} \log(2 + \log(s_n/s_0))} \cdot 2(\sqrt{s_n} - \sqrt{s_0})\sqrt{\frac{s_0}{1 + s_0}}\\
    & \stackrel{(iii)}{\geq} \frac{1}{(2 + \log(s_n/s_0))^{1/2} \log(2 + \log(s_n/s_0))} \cdot \sqrt{2} (\sqrt{s_n} - \sqrt{s_0}).
\end{split}
\end{equation}
In equation $(i)$, we plug in the expression of function $f$ and hence $\sqrt{s_0}$ cancels out. Since $x_t$ is either $0$ or $1$, inequality $(ii)$ follows from the integration of the function $h(x) = 1/\sqrt{x}$. Inequality $(iii)$ follows from $s_0 > 1$.
Putting things together, we have 
\begin{equation}
\begin{split}
    |\holee - \theta^*| & \leq \sigmaSG \frac{ \sqrt{2 \log(2/\delta^2)}}{\sum_{1\leq t\leq n} w_t x_t} \\
    & \leq \sigmaSG \sqrt{ \log( 2/\delta^2 }   )   \frac{ \sqrt{2 + \log(s_{n}/s_0)} \log\{2 + \log(s_{n}/s_0)\} }{\sqrt{s_{n}} - \sqrt{s_0}}.
\end{split}
\end{equation}
This completes our proof of Remark~\ref{rem:olee-finite-sample}.

\subsection{Proof of Corollary~\ref{cor:ar-clt}}
\label{app:ar-clt-proof}
Note that it suffices to verify the following condition~\eqref{eq:1d-clt-cond-ar} 
\begin{equation}
    \label{eq:1d-clt-cond-ar} 
  \max_{1\leq t\leq n}f^2\Big(\frac{s_{t}}{s_0}\Big)\frac{y_{t-1}^2}{s_0} + \max_{1\leq t\leq n}\bigg(1 - \frac{f(s_{t}/s_0)}{f(s_{t-1}/s_0)}\bigg) + \int_{s_{n}/s_0}^\infty f^2(x) dx  = o_{p}(1)
\end{equation}
for $\theta^*\in [-1,1]$ in order to complete the proof of Corollary~\ref{cor:ar-clt}. The other part of the proof can be adapted from the proof of Theorem~\ref{theo:1d-clt}. To simplify notations, we let
\begin{equation*}
   T_1 \stackrel{\Delta}{=} \max_{1\leq t\leq n}f^2\Big(\frac{s_{t}}{s_0}\Big)\frac{y_{t-1}^2}{s_0}, \quad T_2 \stackrel{\Delta}{=}  \max_{1\leq t\leq n}\bigg(1 - \frac{f(s_{t}/s_0)}{f(s_{t-1}/s_0)}\bigg), \quad \text{and} \quad
   T_3 \stackrel{\Delta}{=} \int_{s_{n}/s_0}^\infty f^2(x) dx.
\end{equation*} 
Therefore, proving equation~\eqref{eq:1d-clt-cond-ar} is equivalent to showing that $T_1$, $T_2$, and $T_3$ converge to zero in probability. We will now demonstrate the convergence of each of these three terms to zero in probability.

\paragraph{$T_1$ with $\theta^*$ = 1:}

To prove $T_1 = o_p(1)$, we make use of a result in~\cite[Equation 3.23]{Lai1982}, which is 
\begin{equation}
    \label{eq:lai323}
    \Prob \left( \liminf_{n\to \infty}  n^{-2}(\log\log n) \sum_{t = 1}^n y_{t-1}^2 = \sigma^2 / 4\right) = 1 .
\end{equation}

\newcommand{\lowern}{n^{2/3}}
\newcommand{\sq}{s_{\scriptscriptstyle \lfloor \lowern \rfloor}}
Observe that 
\begin{equation}
\label{eq:yt-split}
    \begin{split}
       T_1 = \max_{1 \leq t \leq n} f^2(\frac{s_t}{s_0}) \frac{y_{t-1}^2}{s_0}  & = \max_{1 \leq t \leq n} \frac{y_{t-1}^2}{ s_t \log(e^2 s_t/s_0) \{\log\log(e^2 s_t/s_0)\}^{1+\beta} }  \\
        & \leq  \max_{1 \leq t \leq n} \frac{y_{t-1}^2}{ s_t \log(e^2 ) \{\log\log(e^2 )\}^{1+\beta} } \\
        & = \frac{1}{2(\log 2)^{1+\beta}}   \max_{1 \leq t \leq n}  \frac{y_{t-1}^2}{ s_t } \\
        & \leq \frac{1}{2(\log 2)^{1+\beta}} \max\{ \max_{1 \leq t \leq \lfloor \lowern \rfloor}  \frac{y_{t-1}^2}{s_0} , \max_{\lfloor \lowern \rfloor + 1 \leq t \leq n} \frac{y_{t-1}^2}{\sq - s_0} \}
    \end{split}
\end{equation}
In equation~\eqref{eq:yt-split}, we split the sequence into two parts and set different lower bounds for $s_t$. The major benefit of this step is to help us derive a better choice of $s_0$.
Now we bound $\max_{1 \leq t \leq \lfloor \lowern \rfloor} y_{t-1}^2 $ and $\max_{\lfloor \lowern \rfloor + 1 \leq t \leq n} y_{t-1}^2$.  Note that 
\begin{equation}
\label{eq:maxyt}
    \begin{split}
        & \Prob \left( \max_{1 \leq t \leq \lfloor \lowern \rfloor} y_{t-1}^2 \geq \eps \right)  \\ 
         = & \Prob \left( \max\{ \max_{1 \leq t \leq \lfloor \lowern \rfloor} y_{t-1}, \max_{1 \leq t \leq \lfloor \lowern \rfloor} - y_{t-1}  \} \geq \sqrt{\eps} \right) \\
         \leq &  \frac{\Exs \left[ \max\{ \max_{1 \leq t \leq \lfloor \lowern \rfloor} y_{t-1}, \max_{1 \leq t \leq \lfloor \lowern \rfloor} - y_{t-1}  \} \right] }{ \sqrt{\eps} }\\
         \stackrel{(i)}{\leq } & \sqrt{ \frac{ 2 \lowern \sigmaSG^2 \log (2 \lowern)}{ \eps} },
    \end{split}
\end{equation}
where inequality is derived from~\cite[Exercise 2.12]{wainwright2019high} and the fact that $y_i$ is sub-Gaussian with sub-Gaussian parameter $\sigmaSG^2 \lowern $ for $i\leq \lfloor \lowern \rfloor$. Therefore, we conclude that 
\begin{equation*}
    \max_{1 \leq t \leq \lfloor \lowern \rfloor} y_{t-1}^2  = O_p( \lowern \log n).
\end{equation*}
Consequently, we have 
\begin{equation}
\label{eq:maxyt-1}
    \max_{1 \leq t \leq \lfloor \lowern \rfloor} \frac{y_{t-1}^2}{s_0} = \frac{ \lowern \log n}{ n/\log\log n}\cdot O_p(1) = o_p(1).
\end{equation}
By applying the same trick to $\max_{  \lfloor \lowern \rfloor + 1 \leq t \leq n } y_{t-1}^2$, we can derive
\begin{equation*}
     \max_{  \lfloor \lowern \rfloor + 1 \leq t \leq n } y_{t-1}^2  = O_p( n \log n).
\end{equation*}
Hence we have 
\begin{equation}
\label{eq:maxyt-2}
    \max_{  \lfloor \lowern \rfloor + 1 \leq t \leq n } \frac{y_{t-1}^2}{\sq - s_0} =  \frac{O_p(n\log n)}{ n^{4/3}/\log\log n^{2/3}} \cdot \frac{n^{4/3}/\log\log n^{2/3}}{ \sq - s_0 } \stackrel{(ii)}{=} o_p(1) \cdot O_p(1) = o_p(1).
\end{equation}
Equality $(ii)$ makes use of equation~\eqref{eq:lai323}. Combining equation~\eqref{eq:maxyt} with equations~\eqref{eq:maxyt-1} and~\eqref{eq:maxyt-2}, we conclude that $T_1 = o_p(1)$.

\paragraph{$T_1$ with $\theta^* = -1$:}
When $\theta^* = -1$, the proof is essentially the same as the case when $\theta^* = -1$. The only difference lies in the order of $\sum_{1\leq i \leq n} y_{i-1}^2$. However, by pairing $\eps_{\scriptscriptstyle 2t - 1}$ with $\eps_{\scriptscriptstyle 2t}$ for $t \in \mathbb{N}^+$, we can arrive at the same result. Specifically, for $t \in \mathbb{N}^+$, we let $\eps^\prime_{t} =  \eps_{\scriptscriptstyle 2t} - \eps_{\scriptscriptstyle 2t-1}$ and define
\begin{equation*}
    y^\prime_{t} =  \sum_{k = 1}^{t} \eps^{\prime}_k
\end{equation*}
where $y^\prime_{0} \stackrel{\Delta}{=} 0$ and $\{ \eps^\prime_t \}_{t\geq 1}$ are random variables with mean zero, variance $2\sigma^2$ and sub-Gaussian parameter $2\sigmaSG^2$. Therefore, applying equation~\eqref{eq:maxyt} yields
\begin{equation}
    \label{eq:max-yt}
     \liminf_{n\to \infty}  n^{-2}(\log\log n) \sum_{t = 1}^n (y^\prime_{t-1})^2 = \sigma^2. 
\end{equation}
\newcommand{\nq}{ \lfloor (\lfloor \lowern \rfloor - 1)/2  \rfloor}
Setting $n_0 = \nq $, we have 
\begin{equation}
\label{eq:sn-lowerbound}
    \begin{split}
    \sq - s_0 = \sum_{t = 1}^{ \lfloor \lowern \rfloor - 1 } y_{t}^2 \geq  \sum_{ t = 1}^{ n_0} (y_t^\prime)^2 = \sum_{ t = 1}^{ n_0 + 1} (y_{t-1}^\prime)^2.
    \end{split}
\end{equation}
According to equation~\eqref{eq:max-yt} and equation~\eqref{eq:sn-lowerbound}, we have 
\begin{equation}
\begin{split}
    \max_{  \lfloor \lowern \rfloor + 1 \leq t \leq n } \frac{y_{t-1}^2}{\sq - s_0} & \leq  \frac{ \max_{  \lfloor \lowern \rfloor + 1 \leq t \leq n } y_{t-1}^2}{\sum_{1\leq t \leq n_0+1} (y_{t-1}^\prime)^2 } \\
    & =  \frac{ \max_{  \lfloor \lowern \rfloor + 1 \leq t \leq n } y_{t-1}^2}{(n_0+1)^2/\log\log(n_0+1) } \cdot \frac{(n_0+1)^2/\log\log(n_0+1) }{ \sum_{1\leq t \leq n_0+1} (y_{t-1}^\prime)^2} \\
    & = o_p(1)\cdot O_p(1) = o_p(1),
    \end{split}
\end{equation}
which completes the proof of $T_1 = o_p(1)$ for the case when $\theta^* = -1$.

\paragraph{$T_1$ with $\theta^* \in (-1,1)$:}

Given $\theta^* \in (-1,1)$, we observe that $y_t$ is a sub-Gaussian random variable with sub-Gaussian parameter  $\frac{\sigmaSG^2}{1 - (\theta^*)^2}$ for any $t\in \mathbb{N}^+$. Therefore, following equation~\eqref{eq:yt-split}, we have 
\begin{equation}
    T_1 \leq  \frac{1}{2(\log 2)^{1+\beta}}   \max_{1 \leq t \leq n}  \frac{y_{t-1}^2}{ s_0 }
\end{equation}
where in the above inequality we use $s_0$ as a lower bound for $s_t$. By applying~\cite[Exercise 2.12]{wainwright2019high}, we have 
\begin{equation}
    \max_{1 \leq t \leq n} y_{t-1}^2  = O_p( \log n),
\end{equation}
leading to the conclusion that $T_1 = o_p(1)$.

\paragraph{$T_2$ with $\theta^* \in [-1,1]$:}
Similar to equation~\eqref{eq:verify-normality-condition-2}, we have 
\begin{equation*}
    \begin{split}
           T_2 = \max_{1\leq t\leq n} \bigg( 1 - \frac{f(s_{t}/s_0)}{f(s_{t-1}/s_0)} \bigg)  &  =  \max_{1\leq t\leq n} \frac{f(s_{t-1}/s_0) -  f(s_{t}/s_0)}{f(s_{t-1}/s_0)} \\
         &  \leq    \max_{1\leq t\leq n} \frac{ - f^\prime(s_{t-1}/s_0)}{f(s_{t-1}/s_0) } \frac{y_{t-1}^2}{s_0}.
    \end{split}
\end{equation*}
Define $g(x) = -f^\prime(x)/f(x)$ and we can compute that 
\begin{equation*}
    \int g(x) dx = -\int \frac{f^\prime(x)}{f(x)} dx = -\int \frac{1}{f} df = -\log f + C,
\end{equation*}
where $C$ is some constant. Doing some calculation yields
\begin{equation*}
\begin{split}
    g(x) & = \frac{d}{dx} -\log f = \frac{d}{dx} \left\{ \frac{1}{2}( \log(x) + \log\log(e^2 x)) + (1+\beta)\log\log\log(e^2 x)  ) \right\} \\
    & = \frac{1}{2x} \left\{ 1+ \frac{1}{\log(e^2x)} + \frac{1+\beta}{\log(e^2x)}\cdot\frac{1}{\log\log(e^2x)} \right \}.
    \end{split}
\end{equation*}
Therefore, we have 
\begin{equation*}
\begin{split}
    T_2 & \leq \max_{1\leq t\leq n} \frac{ - f^\prime(s_{t-1}/s_0)}{f(s_{t-1}/s_0) } \frac{y_{t-1}^2}{s_0} =  \max_{1\leq t\leq n} g(s_{t-1}/s_0) \frac{y_{t-1}^2}{s_0} \\
    & \leq  \frac{1}{2}\left( \frac{3}{2} + \frac{1+\beta}{2\log 2}  \right )\max_{1\leq t\leq n} \frac{ y_{t-1}^2}{ s_{t-1}}.
    \end{split}
\end{equation*}
We note that demonstrating $\max_{1\leq t\leq n}  y_{t-1}^2/ s_{t-1} = o_p(1)$ follows the same approach as the proof of $\max_{1\leq t\leq n}  y_{t-1}^2/ s_{t} = o_p(1)$. Hence, we omit it. To conclude, we show that $T_2 = o_p(1)$ for $\theta^* \in [-1,1]$.

\paragraph{$T_3$ with $\theta^* \in [-1,1]$:}
To prove $T_3 = o_p(1)$, it suffices to verify that 
\begin{equation}
\label{eq:T3-equiv}
    \frac{s_0}{\sum_{1\leq t \leq n} y_{t}^2} = o_p(1).
\end{equation}
For convenience, in equation~\eqref{eq:T3-equiv} we use $y_t$ instead of $y_{t-1}$. Note that when $\theta^* = 1$ or $\theta^* = -1$, we have provided almost sure lower bounds for $\sum_{1\leq t \leq n} y_{t}^2$  in the proof of $T_1 = o_p(1)$. Therefore, equation~\eqref{eq:T3-equiv} follows from these lower bounds. To prove equation~\eqref{eq:T3-equiv} when $\theta^* \in (-1,1)$, we begin by rewriting $\sum_{1\leq t \leq n} y_{t}^2$ in quadratic form. Without confusion and loss of generality, we replace $\theta^*$ by $\theta$, consider $\var(\eps_t) = 1$, and set $\bepsn = ( \eps_1, \eps_2, \ldots, \eps_n)^\top$. For $t\in [n]$, we have
\begin{equation*}
    y_t = \sum_{k = 1}^t \theta^{t - k} \eps_k = \ba_t^\top \bepsn,
\end{equation*}
where $\ba_t \in \R^n $ and $a_{t,j} = \theta^{t-j}$ for $j\leq t$ and $a_{t,j} = 0$ for $j > t$. Therefore, $\sum_{1\leq t \leq n} y_{t}^2$ can be written as 
\begin{equation}
    \sum_{1\leq t \leq n} y_{t}^2  =  \sum_{1\leq t \leq n}  \bepsn^\top \ba_t \ba_t^\top \bepsn = \bepsn^\top \bA \bepsn,
\end{equation}
where $\bA = \sum_{1\leq t \leq n} \ba_t \ba_t^\top$. Applying Hanson-Wright inequality (e.g. see~\cite{vershynin2018high}), we have 
\begin{equation}
    \label{eq:hanson-wright}
    \Prob\left( | \bepsn^\top \bA \bepsn - \Exs \bepsn^\top \bA \bepsn | > t \right) \leq 2 \exp\left[ - c \min\left( \frac{t^2}{K^4\Fnorm{\bA}^2} , \frac{t}{K^2\Fnorm{\bA}} \right) \right],
\end{equation}
where $c$ and $K$ are some universal constants. Observe that 
\begin{equation*}
\begin{split}
    \Exs \bepsn^\top \bA \bepsn  & = \trace(A) = \trace( \sum_{1\leq t\leq n} \ba_t\ba_t^\top) = \trace( \sum_{1\leq t\leq n} \ba_t^\top \ba_t) \\
    & = \sum_{1\leq t \leq n} (1 + \theta^2 + \cdots + \theta^{2(t-1)}) \\
    & = \sum_{1\leq t \leq n} \frac{1 - \theta^{2t}}{1 - \theta^2} \\
    & = \frac{n}{1-\theta^2} - \frac{\theta^2(1 - \theta^{2n})}{(1-\theta^2)^2}.
    \end{split}
\end{equation*}
Furthermore, we have
\begin{equation}
    \label{eq:AFnorm}
    \begin{split}
    \Fnorm{\bA}^2 & = \trace(\bA^\top \bA) = \trace(\sum_{1\leq i\leq n} \ba_i \ba_i^\top \cdot \sum_{1\leq j \leq n} \ba_j \ba_j^\top) \\
     & = \sum_{1\leq i\leq n} \sum_{1\leq j \leq n} (\ba_i^\top \ba_j)^2 \\ & = \sum_{1\leq i \leq n} \vecnorm{\ba_i}{2}^4 + 2\sum_{1\leq i < j\leq n} \vecnorm{\ba_i}{2}^4 \cdot \theta^{2(j-i)}.
    \end{split}
\end{equation}
Subsequently, we have 
\begin{equation}
    \sum_{1\leq i \leq n} \vecnorm{\ba_i}{2}^4 \leq \Fnorm{\bA}^2 \leq (1 + \frac{2}{1 - \theta^2}) \sum_{1\leq i \leq n} \vecnorm{\ba_i}{2}^4,
\end{equation}
where 
\begin{equation*}
    \sum_{1\leq i \leq n} \vecnorm{\ba_i}{2}^4 = \frac{n}{(1-\theta^2)^2} -\frac{2\theta^2(1-\theta^{2n})}{(1-\theta^2)^3} + \frac{\theta^4(1-\theta^{4n})}{(1-\theta^2)^2(1 - \theta^4)}.
\end{equation*}
Assuming $\delta\leq 2e^{-c}$ and $t = \frac{1}{c}K^2\Fnorm{\bA}\log(\frac{2}{\delta})$, we have with probability at least $1 - \delta$,
\begin{equation}
\label{eq:lower-yt}
      \bepsn^\top \bA \bepsn \geq  \Exs \bepsn^\top \bA \bepsn - \frac{1}{c}K^2\Fnorm{\bA}\log(\frac{2}{\delta}).
\end{equation}
We note that the term on the right hand side of equation~\eqref{eq:lower-yt} has order $n$. For any $\eps>0$, consider the following probability 
\begin{equation}
\begin{split}
   \limsup_{n\to \infty} \Prob \left( \frac{s_0}{ \bepsn^\top \bA \bepsn} > \eps \right) & \leq 
   \limsup_{n\to \infty} \Prob \left( \frac{s_0}{ \bepsn^\top \bA \bepsn} > \eps,  \bepsn^\top \bA \bepsn \geq \Exs \bepsn^\top \bA \bepsn - \frac{1}{c}K^2\Fnorm{\bA}\log(\frac{2}{\delta} ) \right)  \\
   & + \limsup_{n\to \infty} \Prob \left(   \bepsn^\top \bA \bepsn < \Exs \bepsn^\top \bA \bepsn - \frac{1}{c}K^2\Fnorm{\bA}\log(\frac{2}{\delta} ) \right) \\
   & \leq  \limsup_{n\to \infty} \Prob \left( \frac{s_0}{ \Exs \bepsn^\top \bA \bepsn - \frac{1}{c}K^2\Fnorm{\bA}\log(\frac{2}{\delta} )} > \eps  \right)  + \delta.
   \end{split}
\end{equation}
By fixing $\delta$ and comparing the order of $s_0$ with the order of $\bepsn^\top \bA \bepsn - \frac{1}{c}K^2\Fnorm{\bA}\log(\frac{2}{\delta} )$, we have 
\begin{equation*}
    \limsup_{n\to \infty} \Prob \left( \frac{s_0}{ \Exs \bepsn^\top \bA \bepsn - \frac{1}{c}K^2\Fnorm{\bA}\log(\frac{2}{\delta} )} > \eps  \right) = 0.
\end{equation*}
Since $\delta$ can be arbitrarily small, we conclude that 
\begin{equation}
    \frac{s_0}{\bepsn^\top \bA \bepsn} = o_p(1),
\end{equation}
which completes the proof of $T_3 = o_p(1)$.

\subsection{Proof of Theorem~\ref{thm:stability-conteual-bandits}}
\label{app:stability-conteual-bandits}
Note that for any $t \geq 1$, we have 
\begin{equation}
\label{eq:V-recur}
\|\bV_t\|_{\op} \leq 1 \quad \text{and} \quad \bV_t = \bV_{t-1} - \bV_{t-1}\bz_t\bz_t^\top \bV_{t-1}/(1+\bz_t^\top \bV_{t-1}\bz_t).
\end{equation}
The second part of equation~\eqref{eq:V-recur} follows from the Sherman–Morrison formula. Let $\bu_t = \bV_t \bz_t$ and we adopt the notation $\bV_0 = \bI_d$. By multiplying $\bz_t$ on the right hand side of $\bV_t$, we have
\begin{equation}
\label{eq:Vt-Vtt}
\begin{split}
    \bV_t \bz_t & = \bV_{t-1}\bz_t - \bV_{t-1}\bz_t\bz_t^\top \bV_{t-1} \bz_t /(1+\bz_t^\top \bV_{t-1}\bz_t) \\
     & =  \bV_{t-1}\bz_t \bigg(1 - \frac{\bz_t^\top \bV_{t-1} \bz_t}{1+\bz_t^\top \bV_{t-1}\bz_t} \bigg)= \frac{\bV_{t-1}\bz_t }{1+\bz_t^\top \bV_{t-1}\bz_t}.
\end{split}
\end{equation}
Therefore, following the definition of $\bu_t$, we have
$(1+\bz_t^\top \bV_{t-1}\bz_t)\bu_t =\bV_{t-1}\bz_t$. Consequently, 
\begin{equation}
\sum_{t=1}^n(1+\bz_t^\top \bV_{t-1}\bz_t) \bu_t \bu_t^\top 
= \sum_{t=1}^n \bV_{t-1}(\bV_t^{-1}-\bV^{-1}_{t-1})\bV_t
= \bI_d - \bV_n. 
\end{equation}
By recognizing $\bw_t = \sqrt{1 + \bz_t^\top \bV_{t-1} \bz_t} \cdot \bu_t$, we come to 
\begin{align*}
\sum_{t = 1}^n  \bw_t \bw_t^\top 
= \sum_{t=1}^n \bV_{t-1}(\bV_t^{-1}-\bV^{-1}_{t-1})\bV_t
= \bI_d - \bV_n.
\end{align*}
What remains now is to verify conditions in~\eqref{eq:w_stability}. Notably, assumption $\|\bV_n\|_{\op} = o_p(1)$ implies
\begin{align}
\label{eq:verify-w-I}
\sum_{t = 1}^n  \bw_t \bw_t^\top \conp \bI_d.
\end{align}
Since $\|\bSigma_0^{-1}\|_{\op} = o_p(1)$, $\| \bV_t\|_{\op} \leq 1$ and 
$\| \bx_t\|_2 \leq 1$, we can show  
\begin{equation}
\label{eq:zVtz}
    \max_{1\leq t\leq n} \bz_t^\top\bV_t\bz_t = \max_{1\leq t\leq n} \bx_t^\top   \bSigma_{t - 1}^{-\frac{1}{2}} \bV_t \bSigma_{t - 1}^{-\frac{1}{2}} \bx_t   =  o_p(1).
\end{equation}
Besides, equation~\eqref{eq:Vt-Vtt} together with equation~\eqref{eq:zVtz} implies 
\begin{equation}
\label{eq:zVttz}
    \max_{1\leq t\leq n} \bz_t^\top\bV_{t-1} \bz_t = \max_{1\leq t\leq n} \frac{\bz_t^\top\bV_t \bz_t }{1 - \bz_t^\top\bV_t \bz_t}  =  o_p(1).
\end{equation}
Thus, it follows that
\begin{equation}
\label{eq:verify-w-norm}
\begin{split}
    \max_{1\leq t\leq n} \| \bw_t \|_2 & = \max_{1\leq t\leq n} \left\| \sqrt{1 + \bz_t^\top \bV_{t-1} \bz_t} \cdot \bV_t \bz_t \right\|_2 \\
     & \leq \max_{1\leq t\leq n} \left(
     \sqrt{1 + \bz_t^\top \bV_{t-1} \bz_t}\cdot \|\bV_t^\frac{1}{2}\|_{\op} \cdot \| \bV_t^{\frac{1}{2}} \bz_t\|_2 \right) \\
     & \leq \sqrt{ \bigg(1 +  \max_{1\leq t\leq n} \bz_t^\top \bV_{t-1} \bz_t \bigg ) \cdot  \max_{1\leq t\leq n}  \bz_t^\top \bV_{t} \bz_t }= o_p(1).
\end{split}
\end{equation}
Combining equations~\eqref{eq:verify-w-norm} and~\eqref{eq:verify-w-I} yields \eqref{eq:w_stability}. Hence we complete the proof by applying Proposition~\ref{prop:affinity}.
\begin{remark}
    The detailed proof of Lemma~\ref{lem:lemma-38} can be found in the proof of Theorem~\ref{thm:stability-conteual-bandits}.
\end{remark}

\section{Generalized Theorem~\ref{thm:stability-conteual-bandits}}
In Theorem~\ref{thm:stability-conteual-bandits}, we impose the following condition~\eqref{eqn:theo-cont-condi} so that the ALEE estimator with weights specified in equation~\eqref{eqn:weights-context} achieves asymptotic normality:
\begin{equation}
\label{eqn:theo-cont-condi}
\|\bV_n\|_{\op} =o_p(1).
\end{equation} 
However, it is typically difficult to directly verify the above condition in practice. To tackle this problem, in this section, we provide a modified version of ALEE estimator  which achieves asymptotic normality without requiring condition~\eqref{eqn:theo-cont-condi}.
In this section, we use the same notations $\bSigma_t, \bz_t, \V_t,$ and $\bw_t$ as defined in  equations~\eqref{z_t},~\eqref{eqn:variability-matrix} and \eqref{eqn:weights-context}, respectively. Furthermore, we let $\lambda_1 \geq \ldots \geq \lambda_n$ be the eigenvalues of the matrix $\bV_n^{-1}$  and $\ba_1,\ldots,\ba_n$ be the corresponding eigenvectors.

\newcommand{\Estsigma}{\ensuremath{\widehat{\sigma}^2}}
\begin{algorithm}
\caption{$\;$ Modified ALEE estimate}
\label{alg:modified-alee}
\begin{algorithmic}[1]
\STATE{Input:$\{ (\bx_t, \by_t)\}_{t = 1}^n$ and tuning parameter $\kappa_n$}
\STATE{Compute $\{(\bz_t, \bV_t, \bw_t)\}_{t = 1}^n$, $\{(\lambda_k, \ba_k)\}_{k = 1}^d$, and obtain a consistent estimate $\Estsigma$ of $\sigma^2$}
\STATE{Initiate $t = n$ and set $\tau_n = 1/  \| \bSigma_0^{-1/2} \|_{\op}$}
\FOR{$k = 1,\ldots, d $}
\STATE{Compute  $n_k = \lceil \max\{ \kappa_n - \lambda_k, 0\} \cdot \tau_n  \rceil $}
\IF{$n_k  > 0 $ }
\FOR{$i = 1,\ldots, n_k$}
\STATE{Set $t = t + 1$}
\STATE{Simulate $\eps_{t}\sim \NORMAL(0, \Estsigma)$}
\STATE{Define $\bz_{t} = \ba_{k} / \tau_n $}
\STATE{Compute \begin{equation*}
    \bV_t = \bV_{t-1} - \frac{\bV_{t-1}\bz_t\bz_t^\top \bV_{t-1} }{ 1 + \bz_t^\top \bV_{t-1} \bz_{t}} \qquad \text{and} \qquad
    \bw_t = \sqrt{1 + \bz_t^\top \bV_{t-1} \bz_t} \cdot \bV_t \bz_t
\end{equation*}}
\ENDFOR
\ENDIF
\ENDFOR
\STATE{Obtain $\hbolee$ from equation 
\begin{equation}
\label{eqn:m-alee}
    \sum_{i = 1}^{n} \bw_i( y_i - \bx_i^\top \hbolee) + \sum_{i = n+1}^{t} \bw_i \eps_i = 0
\end{equation}}
\STATE{Output: $\hbolee$}
\end{algorithmic}
\end{algorithm}

At a high level, we construct additional  $m_n$ vectors $\{\bz_t\}_{ n+1 \leq t \leq n+m_n}$ so that the minimum eigenvalue of the resulting matrix $\bV^{-1}_{n + m_n}$ is greater than a pre-specified constant $\kappa_n$, which satisfies $\lim_{n\to \infty} \kappa_n = \infty$. It is easy to see that by construction (see Algorithm~\ref{alg:modified-alee}), the matrix $\bV_{n + m_n}$ satisfies
\begin{equation}
\label{eqn:V-min-eig}
    \|\bV_{n+m_n}\|_{\op} \leq  \frac{1}{\kappa_n} \conp 0 \qquad \text{where} \quad m_n = \sum_{k = 1}^d n_k.
\end{equation}

\begin{remark}
    Parameter $\kappa_n$ is set to ensure condition~\eqref{eqn:V-min-eig} holds. In practice, we set $\kappa_n = d\log(n)$.
\end{remark}
\begin{remark}
It's worth mentioning that the number of extra $\{\bz_t\}_{t>n}$ is a random variable. Therefore, in order to prove a similar asymptotic normality theorem to Theorem~\ref{thm:stability-conteual-bandits}, we have to apply martingale central limit theorem with stopping times~\cite[Theorem 2.1]{helland1982central}.
\end{remark}

\begin{theorem}[Theorem 2.1 in \cite{helland1982central}]
\label{thm:mclt}
Let $\left\{\xi_{n, k}\right\}_{k\geq 1, n\geq 1}$ be an array of random variables defined on a probability space $(\Omega, \cF, P)$ and let $\left\{\cF_{n, k}\right\}_{n\geq 1, k\geq 0}$ be an array of $\sigma$-fields such that $\xi_{n, k}$ is $\cF_{n, k}$-measurable and $\cF_{n, k-1} \subset \cF_{n, k} \subset \cF$ for each $n$ and $k \geqslant 1$. For
each $n$, let $k_n$ be a stopping time with respect to $\left\{\cF_{n, k}\right\}_{k\geq 0}$. Suppose that

\begin{subequations}
\begin{align} 
\label{eq:mclt-cond1}
& \sum_{k=1}^{k_n} \Exs \left [\xi_{n, k} \mid \cF_{n,k-1}\right ] \conp 0, \\ 
\label{eq:mclt-cond2}
& \sum_{k=1}^{k_n} \Var\left [\xi_{n, k} \mid \cF_{n,k-1}\right] \conp 1, \\
\label{eq:mclt-cond3}
& \sum_{k=1}^{k_n} \Exs \left [\left|\xi_{n, k}\right|^{2+\delta}  \mid \cF_{n, k-1} \right] \conp 0 \quad \text{for some } \delta >0,
\end{align}
\end{subequations}
then $\sum_{k = 1}^{k_n} \xi_{n, k} \condi \NORMAL(0,1)$.
\end{theorem}

With this setup, we are now ready to prove the asymptotic normality of $\hbolee$ from~\eqref{eqn:m-alee}.

\begin{theorem}[Generalized Theorem~\ref{thm:stability-conteual-bandits}]
\label{thm:Improved-stability-conteual-bandits}
Suppose condition~\eqref{eq:noise-cond} holds. Then, for any tuning parameters $\bSigma_0$ and $\kappa_n$ that satisfy   $\|\bSigma_0^{-1}\|_{\op} = o_p(1)$ and $\lim_{n\to \infty}\kappa_n = \infty$, the ALEE estimator $\hbolee$ obtained from equation~\eqref{eqn:m-alee} satisfies
\begin{align*}
   \left(\sum_{t = 1}^n \bw_t \bx_t \right) \cdot  \frac{\hbolee - \btheta^* }{\widehat{\sigma}}   \overset{d}{\longrightarrow} \NORMAL\big({\bf 0},  \bI_d\big),
\end{align*}
where $\widehat{\sigma}$ is a consistent estimator of $\sigma$.
\end{theorem}

\begin{remark}
We would like to reiterate that  the asymptotic variance of 
of the modified ALEE estimator obtained from~\eqref{eqn:m-alee} is the same as the one mentioned in Theorem~\ref{thm:stability-conteual-bandits}. Additionally, this modified version does not require the condition $\|\bV_n\|_{\op} = o_p(1)$ hold and hence is more applicable in practice with theoretical guarantee.  
\end{remark}

\begin{proof}
Rewriting equation~\eqref{eqn:m-alee}, we have
\begin{equation}
    \label{eqn:m-alee-rewrite}
    \sum_{t=1}^n \bw_t \bx_t^\top (\hbolee - \btheta^*) = \sum_{t = 1}^{n + m_n} \bw_t \eps_t.
\end{equation}
Therefore, by Cramér–Wold theorem, it suffices to show that for any unit vector $\bv$,
\begin{equation}
    \sum_{t = 1}^{n + m_n} \bv^\top \bw_t \eps_t \condi \NORMAL(0, \sigma^2).
\end{equation}
The proof now follows by verifying the conditions~\eqref{eq:mclt-cond1}-\eqref{eq:mclt-cond3} of Theorem~\ref{thm:mclt} with $\xi_{n, k} = \bv^\top \bw_k \eps_k$.  We begin by verifying conditions~\eqref{eq:mclt-cond1}-\eqref{eq:mclt-cond3}. By Lemma~\ref{lem:lemma-38}, we have
\begin{equation}
\label{eqn:w-kn}
    \sum_{t = 1}^{n + m_n} \bw_t \bw_t^\top = \bI_d - \bV_{n + m_n}.
\end{equation}
Note that 
\begin{equation}
\label{eqn:var-we}
    \sum_{t = 1}^{n + m_n} \Var[\bw_t \eps_t \mid \cF_{t - 1}] = \sum_{t = 1}^{n + m_n} \sigma^2 \bw_t \bw_t^\top + \sigma^2\left( \frac{\Estsigma}{\sigma^2} - 1 \right)\sum_{t = n+1}^{n + m_n} \bw_t \bw_t^\top.
\end{equation}
By equation~\eqref{eqn:w-kn} and the fact that $\Estsigma$ is consistent, we have 
\begin{equation}
\label{eqn:w-sigma}
    \sum_{t = n+1}^{n + m_n} \bw_t\bw_t^\top \preceq \bI_d \qquad \text{and} \qquad \frac{\Estsigma}{\sigma^2} - 1 \conp 0.
\end{equation}
Combining equations~\eqref{eqn:V-min-eig},~\eqref{eqn:w-kn},~\eqref{eqn:var-we} and~\eqref{eqn:w-sigma}, we conclude 
\begin{equation}
    \sum_{t = 1}^{n + m_n} \Var[\bw_t \eps_t \mid \cF_{t - 1}] \conp \sigma^2  \bI_d.
\end{equation}
On the other hand, we have 
\begin{equation*}
\begin{split}
    \max_{1\leq t\leq n + m_n} \| \bw_t\|_2 & \stackrel{(i)}{\leq} \max_{1 \leq t\leq n + m_n} \left(  \sqrt{1 + \bz_t^\top \bV_{t-1} \bz_t} \cdot \| \bV_t \|_{\op} \cdot \| \bz_t\|_2 \right) \\
    & \stackrel{(ii)}{\leq} \max_{1\leq t \leq n + m_n}\sqrt{2} \| \bz_t\|_2 \\
    & \stackrel{(iii)}{\leq} \sqrt{2} \| \bSigma_0^{-1/2} \|_{\op}.
\end{split}
\end{equation*}
Inequality $(i)$ follows from the definition of $\bw_t$. In inequality $(ii)$, we use the assumption that $\bSigma_0 \succeq \bI_d$ and the fact that $\| \bz_t\|_2 \leq 1$ and $\| \bV_t\|_{\op} \leq 1$. The last inequality $(iii)$ follows from the definition of $\bz_t$ and the condition that $\|\bSigma_0^{-1}\|_{\op} = o_p(1)$. Hence, we can see that 
\begin{equation}
    \max_{1\leq t\leq n+m_n} \|\bw_t\|_2 \conp 0.
\end{equation}
Therefore, we have
\begin{equation}
\label{eqn:unit-Lin}
\max_{1\leq t \leq n+m_n} |\bv^\top \bw_t| \conp 0 \qquad \text{and}\qquad
 \sum_{t = 1}^{n + m_n} \Var[\bv^\top \bw_t \eps_t \mid \cF_{t - 1}] \conp \sigma^2.
\end{equation}

Note that condition~\eqref{eq:mclt-cond1} holds because $\{\bv^\top \bw_k \eps_k\}_{k \geq 1}$ is a martingale difference sequence by construction. Condition~\eqref{eq:mclt-cond2} follows from statement~\eqref{eqn:unit-Lin}. It remains to verify condition~\eqref{eq:mclt-cond3}.
Observe that 
\begin{equation*}
\begin{split}
    &  \sum_{t = 1}^{n + m_n} \Exs [ |\bv^\top \bw_t \eps_t|^{2+\delta} \mid \cF_{t-1}] = \sum_{t = 1}^{n + m_n}  |\bv^\top \bw_t|^{2+\delta}  \Exs [|\eps_t|^{2+\delta} \mid \cF_{t-1}]\\
    \leq \quad  & \left(\max_{1\leq t \leq n+m_n} |\bv^\top \bw_t|^{\delta} \right) \cdot \left( \sup_{t\geq 1 }  \Exs [|\eps_t|^{2+\delta} \mid \cF_{t-1}]  \right) \cdot  \max\{\frac{1}{\sigma^2},\frac{1}{\widehat{\sigma}^2}\}\sum_{t = 1}^{n + m_n} \Var[\bv^\top \bw_t \eps_t \mid \cF_{t - 1}] \\
    \stackrel{(iv)}{=} \quad & o_p(1) \cdot O_p(1) \cdot O_p(1) = o_p(1).
    \end{split}
\end{equation*}
Equation $(iv)$ follows from condition~\eqref{eq:noise-cond}, equation~\eqref{eqn:unit-Lin} and the fact that $\widehat{\sigma}^2$ is a consistent estimator. Lastly, by applying Slutsky's theorem, we prove that
\begin{equation}
    \frac{1}{\widehat{\sigma}}\sum_{t=1}^{n} \bw_t \bx_t^\top (\hbolee - \theta^*) \condi \NORMAL(\mzero, \bI_d).
\end{equation}

\end{proof}

\section{Simulation} 
In this section, we provide additional comparisons among  the ALEE method, the OLS, the W-decorrelation~\cite{deshpande2018accurate}, and the concentration inequality based bounds~\cite{abbasi2011improved}. The code can be found at \url{https://github.com/mufangying/ALEE}.

\subsection{Simulation details}
\label{app:simulation-details}
Throughout our experiments, we utilize $\widehat{\sigma}^2$ from equation~\eqref{eqn:noise0var-estimate} as an (consistent) estimate of  of $\sigma^2$~\cite{Lai1982}.
\paragraph{OLS:}
When data are i.i.d, the least squares estimator satisfies the following condition
\begin{equation*}
    \frac{1}{\sigma^2}(\widehat{\btheta}_{ \textrm{LS}} - \btheta^*)^\top \bS_n (\widehat{\btheta}_{ \textrm{LS}} - \btheta^*) \condi \chi^2_d.
\end{equation*}
Therefore, we consider  $1 - \alpha$ confidence region to be 
\begin{equation}
    \bC_{\textrm{LS}} = \bigg\{\btheta \in \R^d :  \frac{1}{\widehat{\sigma}^2}(\widehat{\btheta}_{ \textrm{LS}} - \btheta)^\top \bS_n (\widehat{\btheta}_{ \textrm{LS}} - \btheta) \leq  \chi^2_{d, 1 - \alpha} \bigg\}.
\end{equation}
We point out that the above confidence region is not guaranteed to be accurate when the data is collected in an adaptive manner, as will also be highlighted in our experiments. 

\newcommand{\Ew}{\ensuremath{\widehat{\btheta}_{ \textrm{W}}}}
\newcommand{\Eols}{\widehat{\btheta}_{\textrm{LS}}}
\paragraph{W-decorrelation:}
The W-decorrelation method is borrowed from Algorithm 1 in~\cite{deshpande2018accurate}. Specifically, the estimator takes the form
\begin{equation}
    \Ew = \Eols + \sum_{t =1}^n \bw_t( y_t - \bx_t^\top \Eols).
\end{equation}
Given a parameter $\lambda$, weights $\{\bw_t\}_{1 \leq t\leq n}$ are set as follows
\begin{equation}
    \bw_t = \bigg(\bI_d - \sum_{i = 1}^{t - 1} \bw_t \bx_t^\top \bigg) \bx_t/(\lambda + \| \bx_t \|^2_2).
\end{equation}
Following  the recommendations from the paper~\cite{deshpande2018accurate}, in order to set $\lambda$ appropriately, we first run the bandit algorithm or time series with $N$ replications and record the corresponding minimum eigenvalues $\{\eigmin(\bS_n^{(1)}),\ldots, \eigmin(\bS_n^{(N)})\}$. We choose $\lambda$ to be the 0.1-quantile of $\{\eigmin(\bS_n^{(1)}),\ldots, \eigmin(\bS_n^{(N)})\}$.  Finally, we  obtain a $1 - \alpha$ confidence region for $\btheta^*$ as
\begin{equation}
    \bC_{ \textrm{W} } = \bigg\{ \btheta \in \R^d :  \frac{1}{\widehat{\sigma}^2}(\widehat{\btheta}_{ \textrm{W}} - \btheta)^\top \bW^\top \bW (\widehat{\btheta}_{ \textrm{W}} - \btheta) \leq  \chi^2_{d, 1 - \alpha} \bigg\},
\end{equation}
where $\bW = (\bw_1, \ldots, \bw_n)^\top$.

\paragraph{Concentration based on self-normalized martingales:}
We consider~\cite[Theorem 1]{abbasi2011improved} for a single coordinate in two-armed bandit problem and AR(1) model. For contextual bandits, we apply~\cite[Theorem 2]{abbasi2011improved}. Applying concentration bounds requires a sub-Gaussian parameter, for which we use $\widehat{\sigma}$ from equation~\eqref{eqn:noise0var-estimate} as an estimate. We point out that this estimate of the sub-Gaussian parameter is conservative, as the sub-Gaussian parameter of a sub-Gaussian random variable is always lower bounded by its variance~\cite[Chapter 2]{wainwright2019high}. This variance estimate is accurate for Gaussian noise random variables. 

\begin{itemize}
    \item  For one dimensional examples, we have that for any $\lambda >0$,  with probability at least $1 - \alpha$:
\begin{equation}
     |\widehat{\theta}_{ \textrm{LS}} - \theta^*| \leq \frac{ \widehat{\sigma} \sqrt{\lambda + \sum_{t = 1}^n x_t^2} }{ \sum_{t = 1}^n x_t^2} \sqrt{\log \left(\frac{\lambda + \sum_{t = 1}^n x_t^2 }{\lambda \alpha^2}\right)}.
\end{equation}
In two-armed bandit problem, $x_t$ is simply $x_{t,1}$ for $\theta_1^*$ or $x_{t,2}$ for $\theta_2^*$. Here we consider $\lambda = 1$.
     
    \item For the contextual bandit examples, we apply Theorem 2 from~\cite{abbasi2011improved}, and set   $S = \sqrt{d}$; we set a small value of $\lambda = 0.01$ to mimic the performance of an OLS estimators. Specifically, we utilize the following $1- \alpha$ confidence region
    \begin{equation}
        \bC_{\textrm{con}} = \bigg\{ \btheta \in \R^d: (\widehat{\btheta}_r - \btheta )^\top (\lambda\bI_d + \bS_n)(\widehat{\btheta}_r - \btheta ) \leq \bigg( \widehat{\sigma} \sqrt{ \log \left( \frac{\det(\lambda \bI_d + \bS_n)}{\lambda^d \alpha^2} \right)}+ \lambda^\frac{1}{2} S  \bigg)^2 \bigg\},
    \end{equation}
    where $\widehat{\btheta}_r = (\bX_n^\top \bX_n + \lambda \bI_d )^{-1} \bX_n^\top\bY_n$ and $\bY_n = (y_1, \ldots, y_n)^\top$.
\end{itemize}

\newpage
\subsection{Tables for contextual bandits}
\label{app:contextual-bandit-simulations}
In all the contextual bandit simulations, we consider noises that are generated from a centered Poisson distribution (i.e. $Poisson(1) - 1$). We would like to highlight that the centered Poisson random variable is not sub-Gaussian. Therefore, it is important to note that concentration inequality-based bounds~\cite{abbasi2011improved} may not be guaranteed to work. In the simulations of this section, we set the number of samples as $n = 1000$, and the tables below show results over $1000$ replications. The tables below clearly show that the average log-volume of the confidence regions are smallest for ALEE among methods which yield valid confidence regions (empirical coverage is more than the target coverage). The volume of the confidence region obtained from the OLS estimate is the smallest, but they under-cover the true parameter.
The confidence regions for ALEE are obtained from Theorem~\ref{thm:Improved-stability-conteual-bandits} with  ${\bf\Sigma_0} = \log(n)\cdot \bI_d$ and $\kappa_n = d\log(n)$.

\begin{table}[!htp]\centering
\caption{Contextual bandit: d = 10 }
\label{tab:contextual-bandit-10}
\begin{adjustbox}{width=\columnwidth,center}
\scriptsize
\begin{tabular}{lrrrrrrr}\toprule
Method &\multicolumn{6}{c}{Level of confidence} \\\cmidrule{1-7}
&\multicolumn{2}{c}{0.8} &\multicolumn{2}{c}{0.85} &\multicolumn{2}{c}{0.9} \\\midrule
&Avg. Coverage &Avg. log(Volumn) &Avg. Coverage &Avg. log(Volumn) &Avg. Coverage &Avg. log(Volumn) \\
ALEE & 0.819 ($\pm$ 0.385) & -2.761 ($\pm$  0.263) & 0.872 ($\pm$ 0.334) & -2.370 ($\pm$ 0.263) & 0.920 ($\pm$ 0.271) & -1.894 ($\pm$ 0.263) \\
OLS  &  0.807 ($\pm$ 0.395) & -7.306 ($\pm$  0.262) & 0.863 ($\pm$ 0.344) &  -6.915 ($\pm$ 0.262) & 0.905 ($\pm$ 0.293) & -6.439 ($\pm$ 0.262) \\
W-Decorrelation & 0.785 ($\pm$ 0.411) & 8.382 ($\pm$  0.252) &0.827 ($\pm$ 0.378) & 8.773 ($\pm$  0.252) & 0.868 ($\pm$ 0.338) & 9.249 ($\pm$ 0.252) \\
Concentration &1.000 ($\pm$ 0.000) & 2.517 ($\pm$  0.252) &1.000 ($\pm$ 0.000) & 2.548 ($\pm$ 0.252) &1.000 ($\pm$ 0.000) & 2.591 ($\pm$  0.252) \\
\bottomrule
\end{tabular}
\end{adjustbox}
\end{table}

\begin{table}[h]\centering
\caption{Contextual bandit: d = 50}
\label{tab:contextual-bandit-50}
\scriptsize
\begin{adjustbox}{width=\columnwidth,center}
\begin{tabular}{lrrrrrrr}\toprule
Method &\multicolumn{6}{c}{Level of confidence} \\\cmidrule{1-7}
&\multicolumn{2}{c}{0.8} &\multicolumn{2}{c}{0.85} &\multicolumn{2}{c}{0.9} \\\midrule
&Avg. Coverage &Avg. log(Volumn) &Avg. Coverage &Avg. log(Volumn) &Avg. Coverage &Avg. log(Volumn) \\
ALEE & 0.744 ($\pm$  0.436) & 72.759 ($\pm$   1.403) & 0.809 ($\pm$  0.393) &  73.680 ($\pm$  1.403) & 0.875 ($\pm$  0.331) & 
 74.822 ($\pm$  1.403) \\
OLS &0.730 ($\pm$ 0.444) & 44.640 ($\pm$  1.370) & 0.791 ($\pm$  0.407) &  45.560 ($\pm$  1.370) & 0.847 ($\pm$ 0.360) & 46.703 ($\pm$  1.370) \\
W-Decorrelation & 0.192 ($\pm$ 0.394) & 97.559 ($\pm$  1.337) & 0.225 ($\pm$ 0.418) &  98.479 ($\pm$  1.337) & 0.276 ($\pm$  0.447) & 99.622 ($\pm$  1.337) \\
Concentration &1.000 ($\pm$ 0.000) & 90.964 ($\pm$  1.312) &1.000 ($\pm$ 0.000) & 91.004 ($\pm$  1.312) &1.000 ($\pm$ 0.000) & 91.060 ($\pm$  1.312) \\
\bottomrule
\end{tabular}
\end{adjustbox}
\end{table}

\subsection{Asymptotic normality with centered Poisson noise variables}
\label{app:poisson-asymp-figure}
\begin{figure}[!hb]
  \centering
  \begin{minipage}[b]{0.49\textwidth}
    \centering
    \includegraphics[scale = 0.33]{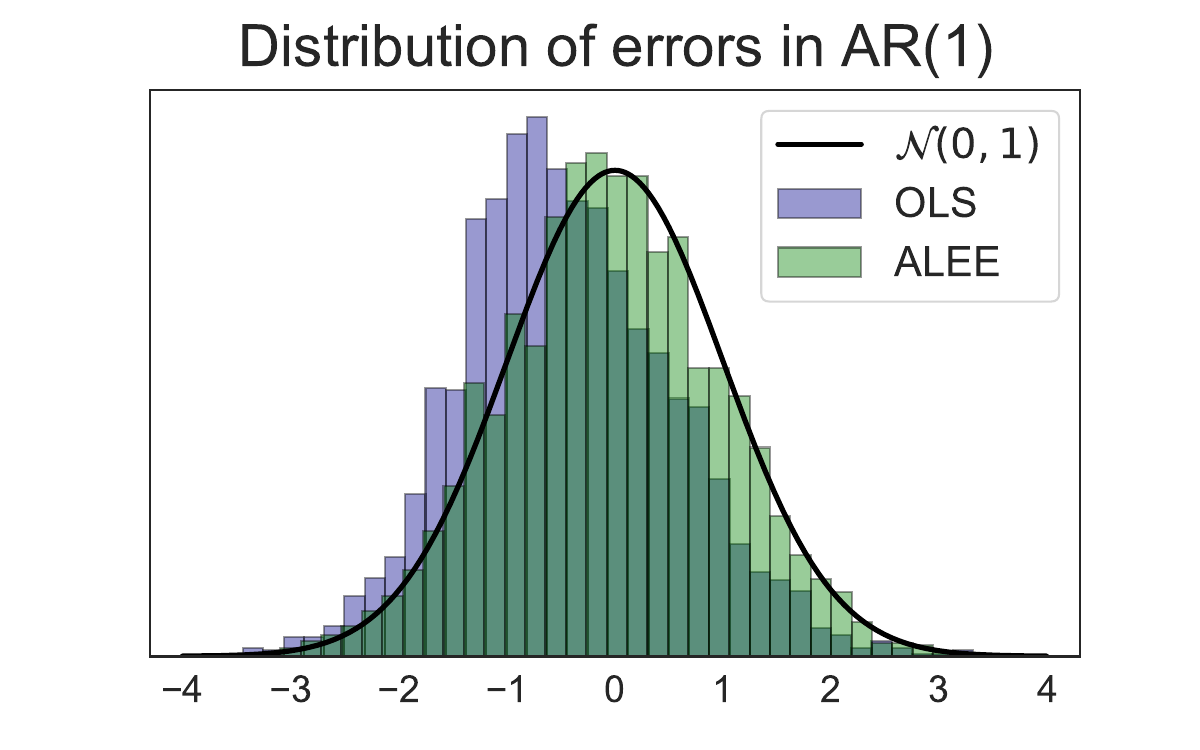}
  \end{minipage}
  \hfill
  \begin{minipage}[b]{0.49\textwidth}
    \centering
    \includegraphics[scale=0.33]{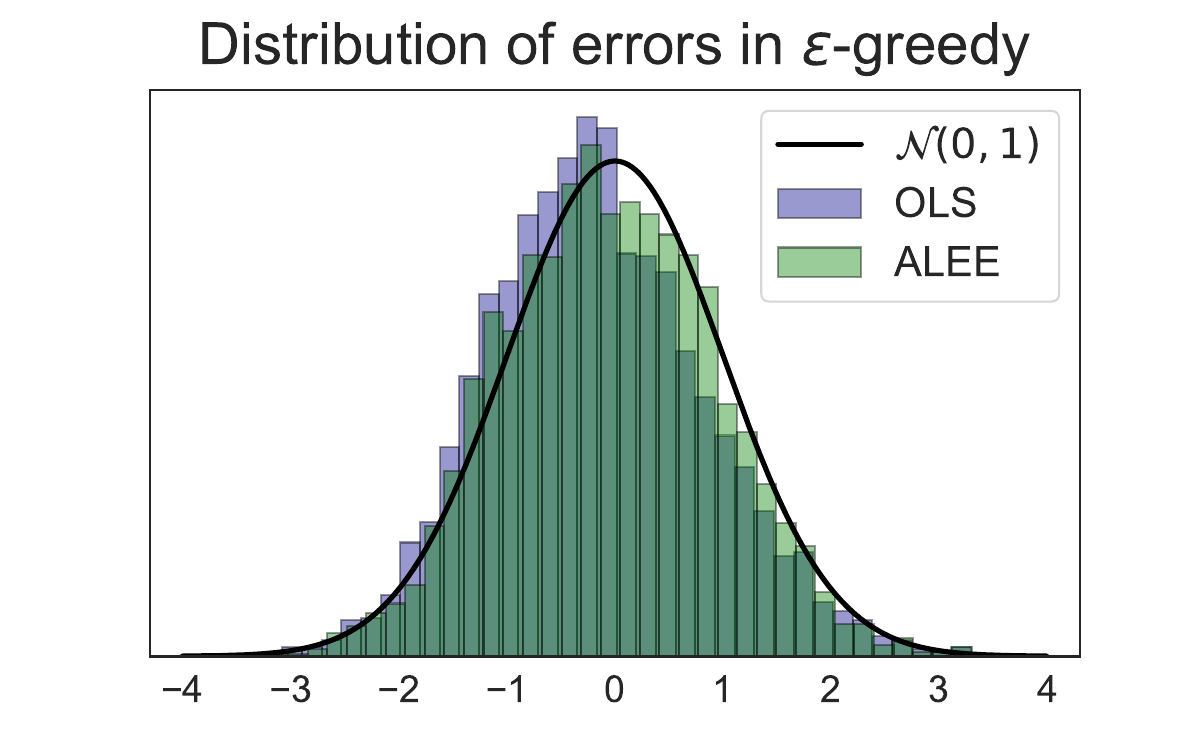}
  \end{minipage}
  \caption{Same setting as Figure~\ref{fig:asym-errors} but with noise variables $\{\eps_t\}$ distributed as centered $Poisson(1)$. We set $n = 3000$ and the number of replications is set to $1000$. The simulations show that the asymptotic distribution of ALEE is in good accordance with the asymptotic normality proved in Corollary~\ref{cor:ar-clt} and Theorem~\ref{theo:1d-clt}.}
  \label{fig:asymp-poisson}
\end{figure}

\end{document}